\documentclass[11pt,a4paper]{article}
\usepackage[T1]{fontenc}
\usepackage[utf8]{inputenc}
\usepackage{authblk}
\usepackage{amssymb, amsmath}
\usepackage{amsthm, amsfonts,mathrsfs}
\usepackage{mathtools}
\usepackage{times}
\usepackage{graphicx}
\usepackage{subfigure}
\usepackage{cite}
\usepackage{hyperref}
\usepackage{epsfig}
\usepackage{color}
\usepackage[all]{xypic}
\usepackage{comment}
\usepackage{tikz}
\usepackage{pgfplots}
\usepackage{float}
\pgfplotsset{compat=1.18}
\usepgfplotslibrary{fillbetween} 
\hypersetup{colorlinks=true,citecolor=blue,linkcolor=blue}
\newtheorem{thm}{Theorem}[section]

\newtheorem{lemma}[thm]{Lemma}

\newtheorem{remark}[thm]{Remark}

\numberwithin{equation}{section}
\newcommand{\ep}{\varepsilon}
\newcommand{\al}{\alpha}
\newcommand{\ka}{\kappa}

\newcommand{\xe}{X^\varepsilon}
\newcommand{\ye}{Y^\varepsilon}

\newcommand{\E}{\mathbb{E}}

\newcommand{\R}{\mathbb{R}}
\newcommand{\N}{\mathbb{N}}
\newcommand{\intot}{\int_0^t}
\newcommand{\intr}{\int_{\mathbb{R}-\{0\}}}

\newcommand{\barf}{\bar{f}}
\newcommand{\la}{\langle}
\newcommand{\ra}{\rangle}

\newcommand{\sgn}{\operatorname{sgn}}
\newcommand{\law}{\operatorname{Law}}
\newcommand{\bp}{\bar{\Phi}}
\newcommand{\hp}{\widehat{\Phi}}
\newcommand{\p}{\Phi}
\newcommand{\pr}{\prime}
\newcommand{\tm}{\widetilde{M}}
\newcommand{\td}{\tau_D}
\newcommand{\T}{\Theta}
\newcommand{\M}{\mathcal{M}}
\newcommand{\bm}{\mathbf{M}}
\newcommand{\tT}{\widetilde{\Theta}}

\title{\textbf{Stable and Gaussian Fluctuation Limit of a L\'evy-Driven Slow-Fast System with Polynomial Dissipation}\thanks{This work is supported by NSFC Grant No.  12371243.}}

\author[1]{Qingming Zhao\thanks{zhao.qingming@foxmail.com}}

\author[2]{Xueru Liu\thanks{327625941@qq.com}}

\author[1]{Wei Wang\thanks{Corresponding author: wangweinju@nju.edu.cn}}

\affil[1]{School of Mathematics, Nanjing University, Nanjing 210093, P. R. China}
\affil[2]{School of Mathematical Sciences, Dalian University of Technology, 116024 Dalian, P. R. China}

\date{} 

\begin{document}
	\maketitle

	\noindent{\small{\hspace{1.1cm} }}

	\noindent \textbf{Abstract~~~} We study fluctuation limit of a slow-fast system driven by $\alpha$-stable L\'evy noise with $1<\alpha<2.$ The slow component is  generated by an odd polynomial function $f(y):=y^q,$ while in the fast component, the drift is $g(y):=-|y|^p\operatorname{sgn}(y)$ for some $p>0.$ Although the noise is given, the fluctuation limit is either a stable process or a Brownian motion, depending on both $p$ and $\al.$ The critical line between stable limit and Brownian limit is $q+1-p=\alpha/2.$   
	\\[2mm]
	\textbf{Keywords~~~}{Slow-Fast System}, {Central Limit Theorem}, {Averaging Principle}, {Stable Process}
	\\[2mm]
	\\
	\textbf{2020 Mathematics Subject Classification~~~}60F05, 60G51, 60H10

\section{Introduction}
For the L\'evy-driven slow-fast system of the form
\begin{equation*}
	\begin{cases}
		\dot{x}^\ep_t=f_1(x^\ep_t,y^\ep_t)+c_1\dot{L}^1_t,\enspace x^\ep_0=x_0,\\
		\dot{y}^\ep_t=\frac{1}{\ep}f_2(x^\ep_t,y^\ep_t)+c_2 \dot{L}^{2,\ep}_t,\enspace y^\ep_0=y_0,
	\end{cases}
\end{equation*}
where $L^{2,\ep}$ is a suitable re-scaled process of the L\'evy process $L^2,$ there are classical questions: (1) whether $x^\ep\to \bar x$ for some random process $\bar x$; (2) is there a $\gamma>0$ such that $\frac{x^\ep-\bar x}{\ep^\gamma}\to z$ for some non-zero random process $z.$ The first question is known as averaging, which is linked to law of large number, while the second one is known as fluctuation, which is linked to (generalized) central limit theorem. 

In order to obtain $\bar x$, if $L^1$ and $L^2$ are $\al$-stable processes with $\al\in(1,2),$ $L^{2,\ep}$ should be taken as $\ep^{-1/\al}L^2,$ see  \cite{StableAveraging}. Besides, to obtain a non-zero $z,$ if $L^1$ and $L^2$ are L\'evy processes with truncated jump, setting $L^{2,\ep}_t=L^2_{t/\ep},$  in \cite{Fluc-Levy} they proved $\gamma$ should be $1/2,$ and the limit is a SDE driven by Brownian motion. This is quite similar to the Brownian-driven case, which also requires $\gamma=1/2,$ see e.g. \cite[Section 5.4]{DW14}.

A natural question is that, whether we can obtain a limit which is a jump process. As we know, stable law can arise in the limit of a sequence of random variables with infinite variance~\cite{BRE92}. So it is reasonable to take the driven-noise as $\al$-stable process and to expect the limit is again a stable process. Also note that there has been some work obtaining stable limit from discrete dynamical system, see e.g. \cite{CFKMI20,IR15}.

Furthermore, in the aforementioned work \cite{StableAveraging,Fluc-Levy}, as well as most of the references therein, some restrictive assumptions are imposed, such as Lipschitz continuity and boundedness of the drift. In our article, we do not impose such abstract assumptions, and instead, we take $f$ and $g$ to be very concrete functions, which in general do not satisfy those classical assumptions.

Another subtle point is that, in most of the existing literature about averaging, the fast component of the slow-fast system is assumed to be exponentially mixing. However, we will encounter the situation where the fast system seems only polynomial mixing, so that classical approach in averaging is invalid.

To be precise, we set $f(y):=y^q$ for some positive odd $q$, and $g(y):=-|y|^p\sgn(y)$ for some $p>0$. Let $L$ be an $\al$-stable L\'evy process with $1<\al<2.$ It is known that the L\'evy measure $\nu$ of $L$ is \[\nu(dx)=c_\al\frac{1}{|x|^{1+\al}}dx, \] where $c_\al>0$ is a constant, and we assume $c_\al=1$ for notation simplicity. Then the equation
\begin{equation}\label{frozen}
	\dot{Y}=g(Y)+\dot{L}
\end{equation}
has a unique strong solution and admits a unique invariant measure $\mu$ (see Lemma \ref{lemma-wellposed}--\ref{lemma-kulic}).  We assume $Y_0$ is independent of $(L_t)_{t\geq 0},$ and throughout our paper, the filtration is defined as 
\[\mathcal{F}_t:=\sigma(Y_0, L_s:0\leq s\leq t)\enspace\text{with the usual augmentation},\quad t\geq 0.\]
The independence of $Y_0$ and $(L_t)_{t\geq 0}$ makes $L$ a $(\mathcal{F}_t)_t$ $\al$-stable L\'evy process. Also note that $Y$ is a strong Markov process, and its semigroup is denoted as $(P_t)_{t\geq 0}.$
Now we consider
\begin{equation}\label{slow-fast}
	\begin{cases}
		\dot{X}^\ep=f(\ye),\enspace\xe_0=\xi,\\
		\dot{Y}^\ep=\frac{1}{\ep}g(\ye)+\frac{1}{\ep^{1/\al}}\dot{L},\enspace \law(\ye_0)=\mu.
	\end{cases}
\end{equation}
Here $\xi$ is a random variable.  Equation \eqref{frozen} is called frozen equation associated with the fast component of slow-fast system \eqref{slow-fast}. Besides, the averaged equation is
\begin{equation*}
	X_t=\xi+t\barf.
\end{equation*}
 where $\barf:=\int_\R f(y)\mu(dy).$ But since $f$ is odd and $\mu$ is symmetric, we have $\barf:=\int fd\mu=0,$ so we have
 \begin{equation*}
 	X_t=\xi.
 \end{equation*}
 Our main result is
 \begin{thm}\label{main}
 	Suppose that $p\neq q+1.$
 	
 	(i) If $q+1-\al<p<q+1-\frac{\al}{2},$ and in addition, $p>\frac{1}{1+\al}$ when $q=1,$ then there exists an $\frac{\al}{q+1-p}$-stable process $S$ such that 
 	\[\ep^{-(1-\frac{q+1-p}{\al})}(\xe-X)\to S\quad\text{in f.d.d.}\]  
 	
 	(ii) If $p> q+1-\frac{\al}{2},$ there exists a  Brownian motion $W$ with variance $\sigma^2,$ defined in \eqref{def-of-sigma2}, such that \[\ep^{-1/2}(\xe-X)\to W\quad\text{in f.d.d.}\]  
 	
 	(iii) If $p=q+1-\frac{\al}{2},$ there exists a standard Brownian motion $B$ such that \[\frac{\xe-X}{\sqrt{\ep\log(1/\ep)}}\to (q+1-p)^{-3/2}B\quad\text{in f.d.d.}\]
 \end{thm}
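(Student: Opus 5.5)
Since $X_t=\xi$, we have $\xe_t-X_t=\int_0^t f(\ye_s)\,ds$. The plan is to time-change to the frozen process, $\ye_s=Y_{s/\ep}$, where $Y$ solves \eqref{frozen} started from $\mu$ (stationary). This uses the self-similarity $\ep^{-1/\al}L_{\ep\cdot}\overset{d}{=}L$. Then
\[\xe_t-X_t=\ep\int_0^{t/\ep}Y_r^q\,dr .\]
So the statement reduces to a functional limit theorem for additive functionals of the stationary, polynomially mixing Markov process $Y$. The natural tool is a Poisson equation $\mathcal{L}\Phi=-f$, where $\mathcal{L}\phi=g\phi'+\mathcal{A}^\al\phi$ is the generator.

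First I would build $\Phi$ and study its growth. Away from the origin the drift dominates, so $\Phi'(y)\approx -y^q/g(y)=|y|^{q-p}$, giving $\Phi(y)\sim |y|^{q+1-p}\sgn(y)/(q+1-p)$. I would construct $\Phi$ as $\int_0^\infty P_tf\,dt$, or by solving the ODE part explicitly and correcting the nonlocal part. Convergence then needs the polynomial mixing estimates from Lemma \ref{lemma-kulic}, together with the tail of $\mu$, which I expect to be $\mu(|y|>R)\sim CR^{-\al}$ because jumps dominate and the drift returns the process. Itô's formula for Lévy processes gives
\[\ep\int_0^{t/\ep}Y^q_r\,dr=\ep\bigl(\Phi(Y_0)-\Phi(Y_{t/\ep})\bigr)+\ep\, M_{t/\ep},\qquad M_T=\int_0^T\!\!\int \bigl(\Phi(Y_{r-}+z)-\Phi(Y_{r-})\bigr)\tilde N(dr,dz).\]
The condition $p>q+1-\al$ gives $q+1-p<\al$. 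Hence $\Phi(Y_0)$ has a finite first moment, and after rescaling the boundary term vanishes in probability under each normalization.

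Second, I would treat the martingale by splitting jumps. The jump increment is $\Phi(Y_{r-}+z)-\Phi(Y_{r-})\approx |z|^{q+1-p}\sgn(z)/(q+1-p)$ for large $|z|$. The image of $\nu$ under $z\mapsto|z|^{\beta}\sgn z/\beta$ with $\beta=q+1-p$ has tail $\sim u^{-\al/\beta}$. This is stable of index $\al/\beta\in(1,2)$ in case (i), has infinite variance exactly at the critical value $\al/\beta=2$, and has finite variance in case (ii).

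In case (i) I would use the scaling $\ep\cdot \ep^{-\beta/\al}$ with $T=t/\ep$. The large jumps, those with $|z|\gtrsim \ep^{-1/\al}$, then produce, via convergence of Poisson point processes, an $\al/\beta$-stable limit $S$. The small jumps and the state-dependent error $\Phi(Y+z)-\Phi(Y)-\Phi(z)$ must be shown negligible in $L^2$ or $L^1$ using moments of $\mu$. The extra restriction $p>1/(1+\al)$ for $q=1$ presumably enters here, or in the regularity of $\Phi$ near $0$.

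In case (ii) I would apply the martingale CLT. The quadratic variation $\ep\int_0^{t/\ep}\!\int(\Phi(Y_r+z)-\Phi(Y_r))^2\nu(dz)\,dr$ converges to $t\sigma^2$ by the ergodic theorem, with $\sigma^2=\int\!\int(\Phi(y+z)-\Phi(y))^2\nu(dz)\mu(dy)$, which is finite since $2\beta<\al$ and the $\mu$-moments suffice. A Lindeberg condition holds because individual jumps are $o(\ep^{-1/2})$.

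In case (iii) the second moment diverges logarithmically. I would truncate jumps at level $\ep^{-1/\al}$. The truncated quadratic variation is then $\sim \ep T\cdot 2\int_1^{\ep^{-1/\al}} z^{2\beta}\beta^{-2}z^{-1-\al}dz\cdot$, and since $2\beta=\al$ this equals $(2/\beta^2)\cdot\frac{1}{\al}\log(1/\ep)\,t=\beta^{-3}\log(1/\ep)\,t$. This matches the constant $(q+1-p)^{-3/2}$. The remaining jumps are negligible after dividing by $\sqrt{\ep\log(1/\ep)}$. Finite-dimensional convergence follows from independent increments of the limits, through convergence of conditional characteristic functions.

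The main obstacle is the first step: constructing $\Phi$ with sharp two-sided asymptotics, including precise control of $\Phi(y+z)-\Phi(y)-\Phi(z)$. This has to come from only polynomial mixing of $Y$, together with justifying Itô's formula for the unbounded $\Phi$. I would try to obtain $\Phi$ through an explicit ODE ansatz $\Phi_0(y)=\int_0^y|u|^{q-p}du$ (regularized at $0$). The residual $\mathcal{A}^\al\Phi_0$ is then a lower-order function with faster decay, and I would solve a second Poisson equation for it using the mixing rate.
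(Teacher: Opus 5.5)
Your skeleton matches the paper: an explicit corrector with $g\p'=\pm f$ and $|\p(y)|=|y|^{r}/|r|$, $r=q+1-p$; It\^o's formula; the fact that $\nu\circ\p^{-1}$ has index $\al/r$; a martingale CLT with a second Poisson equation in (ii); and the $r^{-3}\log$ variance in (iii), where your constant is right.

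\textbf{The gap.} The step you yourself call the main obstacle is a genuine gap as you set it up. Your It\^o identity has no remainder term, so it needs an exact Poisson solution $\p_0+\Psi$, with $\Psi$ coming from a second Poisson equation solved ``using the mixing rate''. In case (i) with $q=1$, $0<p<1$, the process $Y$ is only polynomially mixing. Your plan supplies neither the existence of such a $\Psi$, nor its growth, nor the regularity needed for It\^o's formula and for the error bounds.

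\textbf{The missing idea.} Outside (ii) no exact solution is needed. Keep only $\hp$, your regularized $\p_0$:
\[
\int_0^t f(Y_s)\,ds=\hp(Y_t)-\hp(Y_0)-M_t-\int_0^t R(Y_s)\,ds ,
\]
where $R$ is bounded because $r<\al$ (Lemma~\ref{lemma-boundR}). The normalizations are super-diffusive: $T^{r/\al}$ with $r>\al/2$ in (i), and $\sqrt{T\log T}$ in (iii). So it suffices to show $\E(\int_0^T R(Y_s)ds)^2=O(T)$.
\begin{itemize}
\item For $p\ge1$ this follows from exponential mixing.
\item For $p<1$ the paper builds no corrector. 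It combines two facts from \cite{KP19} via the strong Markov property: the hitting-time bound $\E\td\le C|y|^{1-p}$, and $\sup_{t,|y|\le D}|\int_0^tP_sR(y)ds|<\infty$. Together they give $\sup_t|\int_0^tP_sR(y)ds|\le C(1+|y|^{1-p})$. Since $\E|Y_0|^{1-p}<\infty$, this yields
\[
\E\Big(\int_0^T R(Y_s)ds\Big)^2=2\int_0^T\E\Big[R(Y_s)\int_0^{T-s}P_uR(Y_s)\,du\Big]ds\le CT .
\]
\end{itemize}
The second Poisson equation is needed only in (ii), where $p>q+1-\al/2>1$. There, exponential mixing together with the contraction $|Y^y_t-Y^z_t|\le|y-z|$ (from the monotone drift) makes $\Psi=\int_0^\infty P_tR\,dt$ bounded and $\eta$-H\"older for every $\eta<1$. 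That handles the small jumps, since one can take $\eta>\al/2$. It\^o's formula for $\Psi$ is never used: $M^\Theta-I^\Theta$ is a continuous local martingale in the filtration of $L$, hence zero.

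\textbf{Moments of $\mu$.} Your moment picture is off in exactly this regime. The tail of $\mu$ is of order $R^{-(\al+p-1)}$, not $R^{-\al}$, because after a jump to height $z$ the return to the origin takes time of order $z^{1-p}$. Lemma~\ref{lemma-kulic} gives moments only of order $<\al+p-1$, which is $<\al$ when $p<1$.
\begin{itemize}
\item $\E|\p(Y_0)|<\infty$ can fail, e.g.\ for $\al=3/2$, $p=3/5$, where $r=7/5>\al+p-1$. This is harmless for the boundary terms, which vanish by stationarity and tightness alone.
\item It does break a plain $L^2$ (or $L^1$) treatment of $h(y,z)=\hp(y+z)-\hp(y)-\p(z)$. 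The paper uses BDG in $L^\rho$ with $\rho\in(\al,2)$ for $|z|<1$. There $\int_{|z|<1}|z|^\rho\nu(dz)<\infty$ is needed, and $|h|\le C(|z|(2+|y|)^{1-p}+|z|^r)$ forces $\rho(1-p)<\al+p-1$. For $|z|\ge1$ it uses $L^\theta$ with $\al/r<\theta<\min\{\al,(\al+p-1)/(r-1)\}$.
\item The hypothesis $p>1/(1+\al)$ is exactly $\al(1-p)<\al+p-1$, the condition for such a $\rho$ to exist. Your $R^{-\al}$ tail would never produce it, and it has nothing to do with regularity at $0$.
\end{itemize}

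\textbf{Simplifications and a slip.}
\begin{itemize}
\item In (i), $\int_0^t\int\p(z)\tilde N(ds\,dz)$ is already exactly strictly $\al/r$-stable, since $T^{-r/\al}\p(T^{1/\al}u)=\p(u)$ and $\nu$ scales by $T^{-1}$. So no point-process limit or truncation is needed. Note also that jumps below $c\,\ep^{-1/\al}$ are negligible only as $c\to0$.
\item In (iii), the same L\'evy process restricted to $|z|\ge1$ has L\'evy measure $r^{-3}|u|^{-3}\mathbf 1_{\{|u|\ge1/r\}}du$. Its characteristic function gives the constant $r^{-3}$ directly. The state-dependent remainder is $O(\sqrt T)$ in $L^2$ using only $\int\log|y|\,\mu(dy)<\infty$.
\item The prefactor of $\int_0^{t/\ep}Y_r^q\,dr$ in (i) is $\ep^{\beta/\al}$, not $\ep\cdot\ep^{-\beta/\al}$.
\end{itemize}
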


 	\begin{figure}[H]
 		\centering

 		\begin{minipage}{0.48\textwidth}
 			\centering
 			\begin{tikzpicture}
 				\begin{axis}[
 					axis lines = middle,
 					xlabel = {$\alpha$},
 					ylabel = {$p$},
 					xmin = 0, xmax = 3,
 					ymin = 0, ymax = 3,
 					domain = 0:3,
 					samples = 100,
 					width = \textwidth,
 					height = 6.5cm,
 					]

 					\addplot [blue!50, dashed, domain=0:3] {2 - x};
 					\addplot [red!50, dashed, domain=0:3] {2 - x/2};
 					\addplot [green!50, dashed, domain=0:3] {1/(1+x)};

 					\addplot [name path=bottom1, draw=none, domain=1:1.618] {2-x};
 					\addplot [name path=mid1, draw=none, domain=1:1.618] {2 - x/2};
 					
 					\addplot [name path=bottom2, draw=none, domain=1.618:2] {1/(1+x)};
 					\addplot [name path=mid2, draw=none, domain=1.618:2] {2 - x/2};

 					\addplot [name path=midAll, red, thick, domain=1:2] {2 - x/2};
 					
 					\addplot [name path=top, draw=none, domain=1:2] {3};

 					\addplot [cyan, opacity=0.3] fill between[of=bottom1 and mid1];
 					\addplot [cyan, opacity=0.3] fill between[of=bottom2 and mid2];
 					
 					\addplot [orange, opacity=0.3] fill between[of=midAll and top];

 					\draw[gray, dotted, thick] (axis cs:1,0) -- (axis cs:1,3);
 					\draw[gray, dotted, thick] (axis cs:2,0) -- (axis cs:2,3);

 					\node[font=\small] at (axis cs: 1.5, 0.9) {$\frac{1}{1+\al}\vee (2-\al)<p<2-\frac{\al}{2}$};
 					\node[font=\small] at (axis cs: 1.5, 2.0) {$p>2-\frac{\al}{2}$};
 					
 				\end{axis}
 			\end{tikzpicture}
 			\caption{Region for $(\al,p)$ when $q=1$}
 			\label{fig:regionAAA}
 		\end{minipage}\hfill
 		\begin{minipage}{0.48\textwidth}
 			\centering
 			\begin{tikzpicture}
 				\begin{axis}[
 					axis lines = middle,
 					xlabel = {$\alpha$},
 					ylabel = {$p$},
 					xmin = 0, xmax = 3,
 					ymin = 0, ymax = 5.5, 
 					domain = 0:3,
 					samples = 100,
 					width = \textwidth,   
 					height = 6.5cm,       
 					]

 					\addplot [blue!50, dashed, domain=0:3] {4 - x};
 					\addplot [red!50, dashed, domain=0:3] {4 - x/2};

 					\addplot [name path=bottom1, blue, thick, domain=1:2] {4 - x};
 					\addplot [name path=mid1, red, thick, domain=1:2] {4 - x/2};
 					\addplot [name path=top1, draw=none, domain=1:2] {5.5};

 					\addplot [cyan, opacity=0.3] fill between[of=bottom1 and mid1];
 					\addplot [orange, opacity=0.3] fill between[of=mid1 and top1];

 					\draw[gray, dotted, thick] (axis cs:1,0) -- (axis cs:1,5.5);
 					\draw[gray, dotted, thick] (axis cs:2,0) -- (axis cs:2,5.5);

 					\node[font=\small] at (axis cs: 1.5, 2.7) {$4-\alpha < p < 4-\frac{\alpha}{2}$};
 					\node[font=\small] at (axis cs: 1.5, 4.5) {$p > 4-\frac{\alpha}{2}$};
 					
 				\end{axis}
 			\end{tikzpicture}
 			\caption{Region for $(\al,p)$ when $q=3$}
 			\label{fig:region4}
 		\end{minipage}\hfill

 	\end{figure}

 By \eqref{slow-fast}, we have
 \begin{equation}
 	\xe_t-X_t=\intot f(\ye_s)ds.
 \end{equation}
 Below we always write $Y$ to denote the solution of \eqref{frozen} with initial distribution $\mu$.
By scaling property of $L$, $(\ye_t)_{t\geq 0}\overset{d}{=}(Y_{t/\ep})_{t\geq 0},$ and note that
\[ \intot f(Y_{s/\ep})ds=\ep\int_0^{t/\ep}f(Y_r)dr,  \]
so we have
\begin{equation}\label{distribution0}
	 (\xe_t-X_t)_{t\geq 0}\overset{d}{=}\Big(\intot f(Y_{s/\ep})ds\Big)_{t\geq 0}=\Big(\ep\int_0^{t/\ep}Y_r^q dr\Big)_{t\geq 0},
\end{equation}
so
\begin{equation}\label{distribution}
  (\ep^{-\gamma}(\xe_t-X_t))_{t\geq 0}\overset{d}{=}\Big(\ep^{1-\gamma}\int_0^{t/\ep}Y_r^qdr\Big)_{t\geq 0}.
\end{equation}
Let $T:=1/\ep$ and $A_t:=\intot Y_s^qds.$ We will show:

(i) If $q+1-\al<p<q+1-\frac{\al}{2}$, and, when $q=1,$ $p>\frac{1}{1+\al}$ additionally,  then 
\begin{equation}\label{equiv-thm-i}
	\big(T^{-(q+1-p)/\al}A_{Tt}\big)_{t\geq 0}\to (S_t)_{t\geq 0} \quad\text{in f.d.d,}
\end{equation}  
where $S$ is an  $\frac{\al}{q+1-p}$-stable process. This means $\Big(\ep^{(q+1-p)/\al}\int_0^{t/\ep}Y_r^qdr\Big)_{t\geq 0}\to (S_t)_{t\geq 0}$ in f.d.d., and by \eqref{distribution}, $1-\gamma=\frac{q+1-p}{\al},$ so $\gamma=1-\frac{q+1-p}{\al}.$

(ii) If $p>q+1-\frac{\al}{2}$, then 
\begin{equation}\label{equiv-thm-ii}
	(T^{-1/2}A_{Tt})_{t\geq 0}\to (W_t)_{t\geq 0} \quad\text{in f.d.d,}
\end{equation} 
where $W$ is a Brownian motion with variance $\sigma^2$ defined in \eqref{def-of-sigma2}. This means $(\ep^{1/2}\int_0^{t/\ep}Y_r^qdr)_{t\geq 0}\to (W_t)_{t\geq 0}$ in f.d.d., and by \eqref{distribution}, $1-\gamma=\frac{1}{2}$, so $\gamma=1/2$.

(iii) If $p=q+1-\frac{\al}{2},$\begin{equation}\label{equiv-thm-iii}
	\Big(\frac{A_{Tt}}{\sqrt{T\log T}}\Big)_{t\geq 0}\to ((q+1-p)^{-3/2}B_t)_{t\geq 0}\quad\text{in f.d.d,}
\end{equation} 
where $B$ is a standard Brownian motion, so by \eqref{distribution0},
\[ \Big(\frac{\xe_t-X_t}{\sqrt{\ep\log(1/\ep)}}\Big)_{t\geq 0}=\Big(\frac{A_{t/\ep}}{\sqrt{(1/\ep)\log(1/\ep)}}\Big)_{t\geq 0}\to ((q+1-p)^{-3/2}B_t)_{t\geq 0}. \]

 Throughout the paper, we denote $r:=q+1-p$. Since we assumed $p\neq q+1,$ we have $r\neq 0.$  Define
 \begin{equation}\label{def-of-phi}
 	\p(y)=-\frac{1}{r}|y|^r\sgn(y),\quad y\in\R-\{0\}.
 \end{equation}
 Choose odd $\hp\in C^3(\R)$ such that 
 \[\hp(y)=\p(y),\quad \text{for all}\enspace y\in\R\enspace\text{with}\enspace |y|\geq 1. \]
 Define
 \[\bp(y):=\hp(y)-\p(y),\enspace \bar{g}(y):=g(y)\hp^\pr(y)-f(y),\quad y\in\R-\{0\}. \]
   By It\^o formula,
\begin{equation}\label{ito}
	\begin{aligned}[b]
		&\quad\hp(Y_t)-\hp(Y_0)\\&=\intot\hp^\pr(Y_s) g(Y_s)ds+\intot\intr \hp(Y_{s-}+x)-\hp(Y_{s-})\tilde{N}(dsdx)\\&\quad+\intot\intr\hp(Y_{s-}+x)-\hp(Y_{s-})-x\hp^\pr(Y_s)\nu(dx)ds, 
	\end{aligned}
\end{equation}
  By symmetric property of $\nu$,
\begin{equation*}
	\intr \hp(y+x)-\hp(y)-x\hp^\pr(y)\nu(dx)=\int_0^\infty \hp(y+x)+\hp(y-x)-2\hp(y)\nu(dx),
\end{equation*}
substituting which into \eqref{ito} yields
\begin{equation}\label{ito2}
	\begin{aligned}[b]
		&\quad\hp(Y_t)-\hp(Y_0)\\&=\intot\hp^\pr(Y_s) g(Y_s)ds+\intot\intr \hp(Y_{s-}+x)-\hp(Y_{s-})\tilde{N}(dsdx)\\&\quad+\intot\int_0^\infty\hp(Y_{s-}+x)+\hp(Y_{s-}-x)-2\hp(Y_{s-})\nu(dx)ds.
	\end{aligned}
\end{equation}
The definition of $\bar{g}$ gives $f=g\hp^\pr-\bar{g},$ so by \eqref{ito2},
\begin{equation}\label{new-int-f}
	\begin{aligned}[b]
		&\quad\intot f(Y_s)ds\\&=\intot g(Y_s)\hp^\pr(Y_s)ds-\intot\bar{g}(Y_s)ds\\&=\hp(Y_t)-\hp(Y_0)-\intot\intr \hp(Y_{s-}+x)-\hp(Y_{s-})\tilde{N}(dsdx)\\&\quad-\intot\int_0^\infty\hp(Y_{s-}+x)+\hp(Y_{s-}-x)-2\hp(Y_{s-})\nu(dx)ds-\intot\bar{g}(Y_s)ds\\&=:\hp(Y_t)-\hp(Y_0)-M_t-\intot R(Y_s)ds,
	\end{aligned}
\end{equation}
where
\begin{equation*}
	M_t:=\intot\intr \hp(Y_{s-}+x)-\hp(Y_{s-})\tilde{N}(dsdx),
\end{equation*}
and
\begin{equation*}
	R(y):=\bar{g}(y)+\int_0^\infty \hp(y+x)+\hp(y-x)-2\hp(y)\nu(dx).
\end{equation*}

\begin{remark}

	(i) $f,g,\bar{g},\p,\hp,\bp$ and $R$ are all odd functions. The odd property of $R$ comes from its definition and the symmetric property of $\nu$.

	(ii) The definition of $\hp$ implies that $\bp$ is compactly supported. Besides,  a direct computation gives
	\[g(y)\p^\pr(y)=f(y),\quad y\in\R-\{0\},\]
	so \[\bar{g}(y)=g(y)\hp^\pr(y)-g(y)\p^\pr(y)=g(y)\bp^\pr(y),\quad y\in \R-\{0\}, \] which implies that $\bar{g}$ is also compactly supported. 
	
	(iii) It is expected that Theorem \ref{main} (ii) is valid in $r=0$ case, although we exclude this case in our paper. Defining
	$\Phi_0(y):=-\sgn(y)\log|y|,$ and replacing $\Phi$ by $\Phi_0,$ a similar proof with that in Section \ref{subsection-r<alpha/2} is expected to work. However, several quantitative estimates have to be re-written, such as Lemma \ref{lemma-Holder} and Lemma \ref{lemma-technique}. This is left to subsequent work.
\end{remark} 
Our paper is organized as follows. In Section \ref{section-stable-limit}, we demonstrate (i) of Theorem~\ref{main}. We extract a stable process from the pre-limit, and show the rest of pre-limit vanishes as $T$ tends to infinity. In Section \ref{section-Brownian-limit}, we prove (ii) and (iii) of Theorem~\ref{main}. To this aim, a central limit theorem for c\`adl\`ag local martingale is applied. In the critical case $r=\al/2,$ a more subtle estimate is given to confirm the logarithmic scaling. Some technical results are put in Appendix \ref{appendix}.

\section{Stable Limit}\label{section-stable-limit}

In this section, our principal assumption is
\begin{equation}\label{stable-assumption-1}
	q+1-\al<p<q+1-\frac{\al}{2},
\end{equation}
and, if $q=1$, we assume additionally
\begin{equation}\label{stable-assumption-2}
	p>\frac{1}{1+\al}.
\end{equation}
Note that \eqref{stable-assumption-1} gives $\al/2<r<\al.$ Our goal is to show \eqref{equiv-thm-i}.
By \eqref{new-int-f},
\begin{equation}\label{star}
	\begin{aligned}[b]
		  T^{-r/\al}A_{Tt}&=T^{-r/\al}\int_0^{Tt}Y_s^qds\\&=T^{-r/\al}\hp(Y_{Tt})-T^{-r/\al}\hp(Y_0)\\&\quad-T^{-r/\al}M_{Tt}-T^{-r/\al}\int_0^{Tt}R(Y_s)ds\\&=:I_1(t,T)-I_2(t,T)-I_3(t,T)-I_4(t,T).
	\end{aligned}
\end{equation}
 So it suffices to analyze behaviors of terms in \eqref{star} as $T\to\infty$ separately.

\subsection{Vanishing limit of $I_1$ and $I_2$.}

Fix arbitrary $\delta>0$. Since $\law(Y_t)\equiv\mu$, $(\law(Y_{t}))_{t\geq 0}$ is tight in $\R$, so there is a constant $K>0$ such that
\begin{equation}\label{k-bound}
	P(|Y_{t}|>K)<\delta,\quad\text{for all}\enspace t\geq 0.
\end{equation}
 Since $\hp\in C^3(\R)$, it is bounded on compact set. Set $C_K:=\sup\limits_{-K\leq y\leq K}|\hp(y)|<\infty.$ Since $r>\al/2>0$, we have $\lim\limits_{T\to\infty} T^{-r/\al}C_K=0,$ so there is $T_0>0$ such that
\[ T^{-r/\al}C_K<\delta,\quad\text{for all}\enspace T>T_0. \]
Therefore, by \eqref{k-bound}, for all $T>T_0$,
\begin{align*}
	P(T^{-r/\al}|\hp(Y_{Tt})|\geq \delta)\leq P(|Y_{Tt}|>K)\leq \delta.
\end{align*}
This means for each $t\geq 0,$
\begin{equation}\label{I1-result}
\lim\limits_{T\to\infty}	I_1(t,T)=\lim\limits_{T\to\infty}T^{-r/\al}|\hp(Y_{Tt})|= 0
\end{equation}
  in probability, and hence also in f.d.d. Besides, there is obviously
\begin{equation}\label{I2-result}
	\lim\limits_{T\to\infty}I_2(t,T)=\lim\limits_{T\to\infty}T^{-r/\al}|\hp(Y_0)|= 0
\end{equation}
a.s, and hence also in f.d.d.

\subsection{Stable limit of $I_3.$}\label{subsection-I3}

Let $\tm_t:=\intot\intr\p(x)\tilde{N}(dsdx).$ Write
\begin{equation}
	\begin{aligned}[b]
		M_t&=\tm_t+\intot\intr \hp(Y_{s-}+x)-\hp(Y_{s-})-\p(x)\tilde{N}(dsdx)\\&=:\tm_t+\intot\intr h(Y_{s-},x)\tilde{N}(dsdx),
	\end{aligned}
\end{equation} 
where \[ h(y,x):=\hp(y+x)-\hp(y)-\p(x), \]
so
\begin{equation}\label{newI3}
	I_3(t,T)=T^{-r/\al}\tm_{Tt}+T^{-r/\al}\int_0^{Tt}\intr h(Y_{s-},x)\tilde{N}(dsdx)=:I_{31}(t,T)+I_{32}(t,T).
\end{equation}
 The characteristic function of $\tm$ is
\[\varphi_{\tm}(t,\lambda):=\E e^{i\lambda\tm_t}=e^{t\psi(\lambda)}, \]
where
\[ \psi(\lambda)=\intr [e^{i\lambda\p(x)}-1-i\lambda\p(x)]\nu(dx), \] see \cite[(2.9)]{APP09}.
Therefore, the characteristic function of $T^{-r/\al}\tm_{Tt}$ with $T>0$ is
\[\E e^{i\lambda T^{-r/\al}\tm_{Tt}}=\varphi_{\tm}(Tt,\lambda T^{-r/\al})=e^{Tt\psi(\lambda T^{-r/\al})}. \]
For each $T>0,$ by change-of-variable $x=T^{1/\al}u$,  $\nu(dx)=\frac{1}{T}\nu(du)$, and then
\begin{align*}
	&\quad Tt\psi(\lambda T^{-r/\al})\\&=Tt\intr [e^{i\lambda T^{-r/\al}\p(x)}-1-i\lambda T^{-r/\al}\p(x)]\nu(dx)\\&=t\intr[ e^{i\lambda T^{-r/\al}\p(T^{1/\al}u)}-1-i\lambda T^{-r/\al}\p(T^{1/\al}u)]\nu(du)\\&=t\intr[ e^{i\lambda \p(u)}-1-i\lambda \p(u)]\nu(du)\\&=t\psi(\lambda),
\end{align*}
where we used $T^{-r/\al}\p(T^{1/\al}u)=\p(u)$ in the second-last equality, which is seen from the definition of $\p$. This precisely means
\[\E e^{i\lambda T^{-r/\al}\tm_{Tt}}=\E e^{i\lambda\tm_t},\quad\lambda\in\R,\]
so $T^{-r/\al}\tm_{Tt}\overset{d}{=}\tm_t$ for all $t\geq 0$. But both of the processes are L\'evy processes, so the self-similarity implies that $(T^{-r/\al}\tm_{Tt})_{t\geq 0}$ is an $\al/r$-stable process. That is to say,
\begin{equation}\label{I31-result}
	\law(-I_{31}(\cdot,T))=\law(S_\cdot),\quad T>0.
\end{equation}
where $S$ is an $\al/r$-stable process. Next we deal with $I_{32}.$ Write
\begin{equation}
	\begin{aligned}[b]
		&\quad I_{32}(t,T)\\&=T^{-r/\al}\int_0^{Tt}\int_{|x|<1} h(Y_{s-},x)\tilde{N}(dsdx)+T^{-r/\al}\int_0^{Tt}\int_{|x|\geq 1} h(Y_{s-},x)\tilde{N}(dsdx)\\&=:I_{32}^{<1}(t,T)+I_{32}^{\geq1}(t,T).
	\end{aligned}
\end{equation}
For each $t\geq 0$ and $1\leq\rho\leq 2$, by Burkholder--Davis--Gundy inequality \cite[Theorem 3.50]{PZ07}, 
\begin{align*}
	&\quad\E\Big|\intot\int_{|x|<1} h(Y_{s-},x)\tilde{N}(dsdx)\Big|^\rho\\&\leq C_\rho \E[\int_0^\cdot\int_{|x|<1} h(Y_{s-},x)\tilde{N}(dsdx),\int_0^\cdot\intr h(Y_{s-},x)\tilde{N}(dsdx)]_t^{\rho/2}\\&= C_\rho\E\Big[\intot\int_{|x|<1}|h(Y_{s-},x)|^2N(dsdx)\Big]^{\rho/2}\\&\leq C_\rho\E\intot\int_{|x|<1}|h(Y_{s-},x)|^\rho N(dsdx)\\&=C_\rho\E\intot\int_{|x|<1}|h(Y_{s-},x)|^\rho \nu(dx)ds.
\end{align*}
Therefore,
\begin{equation}\label{rho-moment}
\begin{aligned}[b]
		\E|I_{32}^{<1}(t,T)|^\rho&=\E\Big|T^{-r/\al}\int_0^{Tt}\int_{|x|<1} h(Y_{s-},x)\tilde{N}(dsdx)\Big|^\rho\\&\leq C_\rho T^{-\rho r/\al}\E\int_0^{Tt}\int_{|x|<1}|h(Y_{s-},x)|^\rho \nu(dx)ds\\&= C_\rho T^{-\rho r/\al}\int_0^{Tt}\int_{|x|<1}\E|h(Y_{s-},x)|^\rho \nu(dx)ds\\&=C_\rho T^{-\rho r/\al}\int_0^{Tt}\int_{|x|<1}\int_\R|h(y,x)|^\rho\mu(dy)\nu(dx)ds\\&=C_\rho T^{1-\frac{\rho r}{\al}}t\int_{|x|<1}\int_\R|h(y,x)|^\rho\mu(dy)\nu(dx).
\end{aligned}
\end{equation}
If we can show that there is a $\rho\in[1,2]$ such that
\begin{equation}\label{rho-bound}
	1-\frac{\rho r}{\al}<0,\quad\text{and}\enspace \int_{|x|<1}\int_{\R}|h(y,x)|^\rho\mu(dy)\nu(dx)<\infty,
\end{equation}
then from \eqref{rho-moment}, there is necessarily
\begin{equation*}
	\lim\limits_{T\to\infty}I_{32}^{<1}(t,T)=0 \quad\text{in}\enspace L^\rho,
\end{equation*}
and hence $I_{32}^{<1}(\cdot,T)\to 0$ in f.d.d. Similarly, if we find $\theta\in [1,2]$ such that
\begin{equation}\label{theta-bound}
	1-\frac{\theta r}{\al}<0,\quad\text{and}\enspace \int_{|x|\geq 1}\int_\R|h(y,x)|^\theta\mu(dy)\nu(dx)<\infty,
\end{equation}
then there must be $I_{32}^{\geq 1}(\cdot,T)\to 0$ in f.d.d, so
\[I_{32}(\cdot,T)=I_{32}^{<1}(\cdot,T)+I_{32}^{\geq 1}(\cdot,T)\to 0\quad\text{in f.d.d.}\]
 Combined with \eqref{newI3} and \eqref{I31-result}, we have by Slutsky theorem that
\begin{equation}\label{I_3-result}
	\lim\limits_{T\to\infty}-I_3(\cdot,T)=S\quad\text{in f.d.d.}
\end{equation}

Below we find $(\rho,\theta)\in [1,2]\times[1,2]$ such that \eqref{rho-bound} and \eqref{theta-bound} holds. 

\subsubsection{When $q=1$ and $0<p<1$.}
Since $q=1,$ $r=q+1-p=2-p\in (1,2).$

\noindent\textbf{Case 1: $|x|< 1.$}

Since $p>\frac{1}{1+\al},$ we have
\[\al(1-p)-(\al+p-1)=1-(\al+1)p<0,\]  i.e., $\al(1-p)<\al+p-1.$  Therefore, we can choose $\rho\in(\al,2)$ with $\rho(1-p)<\al+p-1$. Also, $\rho r/\al>r,$ so $1-\frac{\rho r}{\al}<0.$ Recall $r>1,$ so $(r-1)_+=1-p,$ and Lemma \ref{lemma-technique} (ii) gives
\[|h(y,x)|\leq C(|x|(2+|y|)^{1-p}+|x|^r). \] Taking $\rho$-th power,
\[|h(y,x)|^\rho\leq C(|x|^\rho(2+|y|)^{\rho(1-p)}+|x|^{\rho r}).\]
Therefore,
\begin{align*}
	&\quad\int_{|x|<1}|h(y,x)|^\rho\nu(dx)\\&\leq C\int_{|x|< 1}|x|^\rho(2+|y|)^{\rho(1-p)}+|x|^{\rho r}\nu(dx)\\&\leq C(2+|y|)^{\rho(1-p)}\int_{|x|< 1}|x|^{\rho-1-\al}dx+C\int_{|x|< 1}|x|^{\rho r-1-\al}dx\\&\leq C(2+|y|)^{\rho(1-p)}+C,
\end{align*}
where we use $\rho r-1-\al>\rho-1-\al>\al-1-\al=-1$ in the last inequality. Since $\rho(1-p)<\al+p-1$, we have
\begin{equation}
	\int \int_{|x|<1}|h(y,x)|^\rho\nu(dx)\mu(dy)<\infty.
\end{equation}

\noindent\textbf{Case 2: $|x|\geq 1.$}
Take $\theta\in (\frac{\al}{r},\min\{\al,\frac{\al+p-1}{r-1}\}).$ Such $\theta$ exists. Indeed, recall $r>1,$ so $\frac{\al}{r}<\al.$ Besides, \[r^2-r-\al=r(r-1)-\al<\al(\al-1)-\al=\al(\al-2)<0, \]
so
\[\frac{\al}{r}-\frac{\al+p-1}{r-1}=\frac{\al}{r}-\frac{\al+1-r}{r-1}=\frac{r^2-r-\al}{r(r-1)}<0.\]
We also note since $\al/r>1$, we always have $\theta>1$.
By Lemma \ref{lemma-technique} (iv), when $|y|<1,$
\begin{equation}
	\int_{|x|\geq 1}|h(y,x)|^\theta\nu(dx)\leq C+C\int_{|x|\geq 1} |y|^\theta|x|^{(r-1)\theta} \nu(dx)\leq C(1+|y|^\theta)\leq C(1+|y|^{r\theta-\al}),
\end{equation}
where we use
\[(r-1)\theta<(r-1)\frac{\al+p-1}{r-1}=\al+p-1<\al   \] to obtain the convergence of $\int_{|x|\geq 1}|x|^{(r-1)\theta}\nu(dx),$
and
\[(r\theta-\al)-\theta=(r-1)\theta-\al<(r-1)\frac{\al+p-1}{r-1}-\al=p-1<0 \] in the last inequality.
 When $|y|\geq 1,$ by Lemma \ref{lemma-technique},
\begin{equation}\label{rho-argument}
	\begin{aligned}[b]
		&\quad\int_{|x|\geq 1}|h(y,x)|^\theta\nu(dx)\\&\leq C+C\int_{|x|\geq 1} \min\{ |x|^\theta|y|^{(r-1)\theta},|y|^\theta|x|^{(r-1)\theta} \}\nu(dx)\\&=C+C\int_{1\leq|x|\leq|y|}|x|^\theta |y|^{(r-1)\theta}\nu(dx)+C\int_{|x|\geq |y|} |y|^\theta |x|^{(r-1)\theta}\nu(dx)\\&=C+C|y|^{(r-1)\theta}\int_{1\leq|x|\leq |y|}|x|^{\theta-1-\al}dx+C|y|^\theta\int_{|x|\geq|y|}^\infty |x|^{(r-1)\theta-1-\al}dx\\&\leq C+C|y|^{r\theta-\theta}+C|y|^\theta\int_{|x|\geq |y|} |x|^{(r-1)\theta-1-\al}dx.
	\end{aligned}
\end{equation}
As a consequence of
\[(r-1)\theta-1-\al<(r-1)\frac{\al+p-1}{r-1}-1-\al=(\al+p-1)-1-\al=p-2<-1,\] we have
\begin{equation}
	\int_{|x|\geq 1}|h(y,x)|^\theta\nu(dx)\leq C+C|y|^{r\theta-\theta}+C|y|^{r\theta-\al}\leq C+C|y|^{r\theta-\theta},
\end{equation}
 Since
$r\theta-\theta=\theta(r-1)<\frac{\al+p-1}{r-1}(r-1)=\al+p-1,$
\begin{equation}
\int_\R\int_{|x|\geq 1}|h(y,x)|^\theta\nu(dx)\mu(dy)\leq\int_\R C+C|y|^{r\theta-\theta}\mu(dy) <\infty.
\end{equation}
\subsubsection{When $q\geq 3$ or $p\geq 1$}

We always take $\rho=\theta=2.$

\noindent\textbf{Case 1: $|x|<1.$}

Taking square in the inequality in Lemma \ref{lemma-technique} (ii),
\[|h(y,x)|^2\leq C_r(|x|^2(2+|y|)^{2(r-1)_+}+|x|^{2r}),\quad y\in\R. \]
Note that 
\[\int_{|x|\leq 1}|x|^2\nu(dx)\leq C,\quad \int_{|x|\leq 1}|x|^{2r}\nu(dx)=2\int_0^1 x^{2r-\al-1}dx\leq C,  \]
where the last inequality comes from $2r-\al-1>\al-\al-1=-1,$
so
\begin{equation}\label{noname6}
	\int_{|x|\leq 1}|h(y,x)|^2\nu(dx)\leq C_r(2+|y|)^{2(r-1)_+}+C.
\end{equation}
Now we claim \begin{equation}\label{inequality}
	2(r-1)_+<\al+p-1.
\end{equation} 
Indeed, if $p\geq 1$ and $q=1,$ then $r-1=(q+1-p)-1=1-p\leq0,$ so\[2(r-1)_+=0<\al-1<\al+p-1.\] If $q\geq 3,$ when $r\leq 1$ \[\al+p-1>1+p-1=p>0=2(r-1)_+.\] So now we assume $q\geq 3$ and $r>1$. But $\al+p-1=\al+(q+1-r)-1=\al+q-r$, so it suffices to show $3r<\al+q+2$. Since $q\geq 3$, we have $2\al<4\leq q+2$, so $3r<3\al=\al+2\al<\al+q+2$, hence \eqref{inequality} is shown. By \eqref{noname6}--\eqref{inequality} and Lemma \ref{lemma-kulic},
\begin{equation}
	\int_\R\int_{|x|< 1}|h(y,x)|^2\nu(dx)\mu(dy)<\infty.
\end{equation}

\noindent\textbf{Case 2: $|x|\geq 1.$}

We first claim the inequality
\begin{equation}\label{inequality2}
	2r-\al<\al+p-1.
\end{equation}
Indeed, if $p\geq 1,$ we have
\[(\al+p-1)-(2r-\al)\geq \al-(2r-\al)=2(\al-r)>0. \]
If $q\geq 3$, since $\al+p-1=\al+(q+1-r)-1=\al+q-r$, it suffices to show $3r<2\al+q$. Since $q\geq 3$, then $\al< q$, so \[  r<\al=\frac{2\al}{3}+\frac{\al}{3}< \frac{2\al+q}{3},  \]
which is equivalent to $3r<2\al+q$.

\emph{(i) If $\al/2<r<1.$}

By Lemma \ref{lemma-technique}, 
\[|h(y,x)|^2\leq C^2+C^2\min\{|x|^{2r},|y|^{2r}\}. \]
When $|y|<1,$
\begin{equation*}
	\int_{|x|\geq 1}|h(y,x)|^2\leq C \int_{|x|\geq 1}1+|y|^{2r}\nu(dx)\leq C(1+|y|^{2r})\leq C(1+|y|^{2r-\al}).
\end{equation*}
When $|y|>1,$
\begin{align*}
	&\quad \int_{|x|\geq 1}|h(y,x)|^2\nu(dx)\\&\leq C\int_{|x|\geq 1}1+\min\{|x|^{2r},|y|^{2r}\}\nu(dx)\\&=C+\int_1^{|y|}|x|^{2r}\nu(dx)+\int_{|y|}^\infty|y|^{2r}\nu(dx)\\&=C+\int_1^{|y|}x^{2r-1-\al}dx+|y|^{2r}\int_{|y|}^\infty x^{-1-\al}dx\\&\leq C+C|y|^{2r-\al}, 
\end{align*}
where we use $2r-1-\al>-1$ in the last inequality. So we have
\begin{equation*}
\int_{|x|\geq 1}|h(y,x)|^2\leq C(1+|y|^{2r-\al}),\quad y\in\R.
\end{equation*}
 By \eqref{inequality2},
\begin{equation}
\int_\R\int_{|x|\geq 1}|h(y,x)|^2\nu(dx)\mu(dy)\leq\int_\R C(1+|y|^{2r-\al})\mu(dy)<\infty.
\end{equation}

\emph{(ii) If $1\leq r<\al.$}

\noindent When $|y|<1,$ by Lemma \ref{lemma-technique} (iv),
\begin{align*}
	\int_{|x|\geq 1}|h(y,x)|^2\nu(dx)&\leq C\int_{|x|\geq 1}1+|y|^2|x|^{2r-2}\nu(dx)\\&\leq C(1+|y|^2\int_{|x|\geq 1}x^{2r-3-\al}dx)\\&\leq C(1+|y|^2)\leq C(1+|y|^{2r-\al}),
\end{align*}
the last inequality coming from $(2r-\al)-2<2\al-\al-2=\al-2<0.$

When $|y|\geq 1,$ by the same argument in \eqref{rho-argument} with $\theta$ replaced by $2$,
\[\int_{|x|\geq 1}|h(y,x)|^2\nu(dx)\leq C+C|y|^{2r-2}+C|y|^2\int_{|y|}^\infty x^{2(r-1)-1-\al}dx.\]
Since $2r-2<2r-\al$ and $2(r-1)-1-\al=2r-3-\al<\al-3<-1,$
\[\int_{|x|\geq 1}|h(y,x)|^2\nu(dx)\leq C+C|y|^{2r-\al}+C|y|^2\cdot|y|^{2r-2-\al}=C+C|y|^{2r-\al}.\]
By \eqref{inequality2} and Lemma \ref{lemma-kulic},
\begin{equation}
	\int_\R\int_{|x|\geq 1}|h(y,x)|^2\nu(dx)\mu(dy)\leq \int_\R C(1+|y|^{2r-\al})\mu(dy)<\infty.
\end{equation}

\subsection{Vanishing limit of $I_4$.}

Since \[ \E|I_4(t,T)|^2=T^{-2r/\al}\E\Big|\int_0^{Tt}R(Y_s)ds\Big|^2, \]
if we show
\begin{equation}\label{to-be-showed}
	\E\Big|\int_0^{Tt}R(Y_s)ds\Big|^2=\mathcal{O}(T),\quad T\to\infty,
\end{equation}
then $\E|I_4(t,T)|^2=\mathcal{O}(T^{1-\frac{2r}{\al}}).$ By our assumption, $r>\al/2$, which implies $1-\frac{2r}{\al}<0$, so that
\begin{equation}\label{I4-result}
	\lim\limits_{T\to\infty} \E|I_4(t,T)|^2=0,
\end{equation}
and hence also in f.d.d. sense. The rest of the proof is devoted to showing \eqref{to-be-showed}.

Now we show \eqref{to-be-showed}.

\noindent\textbf{Case 1: $p\geq 1$.}

Note that by a change-of-variable $s=v-u$,
\[ \Big(\int_0^{Tt}R(Y_s)ds\Big)^2=2\int_0^{Tt}\int_u^{Tt}R(Y_v)R(Y_u)dvdu=2\int_0^{Tt}\int_0^{Tt-u}R(Y_{s+u})R(Y_u)dsdu. \]
Taking expectation, utilizing the stationary property of $Y$ and Fubini theorem, we have
\begin{align*}
	&\quad\int_0^{Tt}\int_0^{Tt-u}\E[R(Y_{s+u})R(Y_u)]dsdu\\&=\int_0^{Tt}\int_0^{Tt-u}\E[R(Y_{s})R(Y_0)]dsdu\\&=\int_0^{Tt}\int_0^{Tt-s}\E[R(Y_{s})R(Y_0)]duds\\&=\int_0^{Tt}(Tt-s)\E[R(Y_0)R(Y_s)]ds.
\end{align*}
Combining two equations above,
\begin{equation}\label{noname4}
	\E \Big(\int_0^{Tt}R(Y_s)ds\Big)^2=2\int_0^{Tt}(Tt-s)\E[R(Y_0)R(Y_s)]ds.
\end{equation}
Note also $\E R(Y_s)=0$ for each fixed $s\geq 0$, a consequence of odd property of $R$ and symmetry of $\mu$, so by Lemma \ref{lemma-boundR},
\begin{equation}\label{exp-decay}
	|\E R(Y^y_t)|=\|R\|_\infty\Big|\int \frac{R}{\|R\|_\infty}d\law(Y^y_t)-\int\frac{R}{\|R\|_\infty}d\mu\Big|\leq\|R\|_\infty\|\law(Y^y_t)-\mu\|_{\operatorname{TV}}\leq Ce^{-\lambda t},
\end{equation}
where we use Lemma \ref{lemma-kulic} (ii) in the last inequality. Let $(P_t)_{t\geq 0}$ be the Markovian semigroup of \eqref{frozen}, then
\[|P_tR(y)|=|\E R(Y^y_t)|\leq Ce^{-\lambda t}.  \]
By Markov property,
\[E [R(Y_0)R(Y_s)]=\E[R(Y_0)\E(R(Y_s)\mid\mathcal{F}_0)]=\E[R(Y_0)P_sR(Y_0)]=\int R(y)P_sR(y)\mu(dy),\]
so
\[|E [R(Y_0)R(Y_s)]|\leq \int |R(y)||P_sR(y)|\mu(dy)\leq \int C e^{-\lambda s}d\mu=Ce^{-\lambda s}.\]
Substituting into \eqref{noname4}.
\begin{equation}\label{to-be-showed2}
	\E \Big(\int_0^{Tt}R(Y_s)ds\Big)^2\leq C\int_0^{Tt}(Tt-s) e^{-\lambda s}ds =\mathcal{O}(T).
\end{equation}

\noindent\textbf{Case 2: $0<p<1$.}

First we remark that, in this case, we must have $q=1$. Indeed, if by contradiction $q\geq 3$, then from \eqref{stable-assumption-1}, $p>q+1-\al\geq 4-\al>2$, a contradiction to $p<1$. Therefore we can utilize those results in \cite{KP19}.

For a fixed and sufficiently large $D>0$, set $\td:=\inf\{s\geq 0:|Y^y_s|\leq D\}.$ From \cite[(73)]{KP19}, 
\begin{equation}\label{kp19-(73)}
	\E\td\leq C|y|^{1-p}.
\end{equation}

Set $G(t,y):=|\intot\E R(Y^y_s)ds|.$ For each $t\geq 0$ and $-D<y<D$, by \cite[(74)]{KP19}, 
\begin{equation}\label{kp19-(74)}
	G(t,y)\leq \sup_{t\geq 0,|y|\leq D}\Big|\intot\E R(Y^y_s)ds\Big|\leq C.
\end{equation}

Note that

\begin{equation}
	\begin{aligned}[b]
		&\quad \Big|\intot\E R(Y^y_s)ds\Big|\\&=\Big|\E\int_0^{\td\wedge t}R(Y^y_s)ds+\E\int_{\td}^t\mathbf{1}_{\{\td\leq t\}}R(Y^y_s)ds\Big|\\&\leq \E\int_0^{\td\wedge t}|R(Y^y_s)|ds+\Big|\E\Big[\mathbf{1}_{\{\td\leq t\}}\int_{\td}^t R(Y^y_s)ds\Big]\Big|.
	\end{aligned}
\end{equation}
Since $R$ is bounded, by \eqref{kp19-(73)}
\begin{equation}
	\E\int_0^{\td\wedge t}|R(Y^y_s)|ds\leq \|R\|_\infty \E (\td\wedge t)\leq C\E\td\leq C|y|^{1-p}.
\end{equation}
By strong Markov property,
\begin{equation}\label{strong-markov}
	\E\Big[\mathbf{1}_{\{\td\leq t\}}\int_{\td}^t R(Y^y_s)ds\Big]=\E\Big[\mathbf{1}_{\{\td\leq t\}}\Big(\E\int_0^{t-\td}R(Y^z_u)du\Big)\Big|_{z=Y^y_{\td}}\Big].
\end{equation}
The definition of $\td$ implies that $|Y^y_{\td}|\leq D$,
so by \eqref{kp19-(74)}, 
\[ \Big|\Big(\E\int_0^{t-\td}R(Y^z_u)du\Big)\big|_{z=Y^y_{\td}}\Big |=G(t-\td,Y^y_{\td})\leq C_D,\quad\text{a.s.}  \]
By \eqref{strong-markov},
\begin{equation}
	\Big|\E[\mathbf{1}_{\{\td\leq t\}}\int_{\td}^t R(Y^y_s)ds]\Big|\leq C_D.
\end{equation}
Therefore,
\begin{equation}\label{critical}
	\Big|\intot\E R(Y^y_s)ds\Big|\leq C|y|^{1-p}+C_D.
\end{equation}
Write
\[\Big(\int_0^{Tt}R(Y_s)ds\Big)^2=2\int_0^{Tt}\int_s^{Tt}R(Y_s)R(Y_u)duds=2\int_0^{Tt}\Big[R(Y_s)\int_s^{Tt}R(Y_u)du\Big]ds .\]
By Markov property,
\begin{align*}
	&\quad\int_0^{Tt}\E\Big[R(Y_s)\int_s^{Tt}R(Y_u)du\Big]ds\\&=\int_0^{Tt}\E\Big(\E\Big[R(Y_s)\int_s^{Tt}R(Y_u)du\mid\mathcal{F}_s\Big]\Big)ds\\&=\int_0^{Tt}\E \Big(R(Y_s)\E \Big[\int_s^{Tt}R(Y_u)du\mid\mathcal{F}_s\Big]\Big)ds\\&=\int_0^{Tt}\E \Big(R(Y_s) \int_s^{Tt}\E(R(Y_u)\mid\mathcal{F}_s)du\Big)ds\\&=\int_0^{Tt}\E \Big(R(Y_s) \int_s^{Tt}[\E R(Y^z_{u-s})]\big|_{z=Y_s}du\Big)ds\\&=\int_0^{Tt}\E \Big(R(Y_s) \Big[\int_s^{Tt}\E R(Y^z_{u-s})du\Big]\Big|_{z=Y_s}\Big)ds\\&=\int_0^{Tt}\E \Big(R(Y_s) \Big[\int_0^{Tt-s}\E R(Y^z_{u})du\Big]\Big|_{z=Y_s}\Big)ds.
\end{align*}
Combining two equations above,
\begin{equation}\label{noname5}
	\E\Big(\int_0^{Tt}R(Y_s)ds\Big)^2=2\int_0^{Tt}\E \Big(R(Y_s) \Big[\int_0^{Tt-s}\E R(Y^z_{u})du\Big]\Big|_{z=Y_s}\Big)ds.
\end{equation}
By \eqref{critical},
\begin{equation}\label{noname7}
	|\Big[\int_0^{Tt-s}\E R(Y^z_{u})du\Big]\Big|_{z=Y_s}|\leq C|Y_s|^{1-p}+C_D.
\end{equation}
The assumption \eqref{stable-assumption-1} gives $\al+p>2$, so $1<\al+p-1.$ Therefore,
\[ \E|Y_s|^{1-p}\leq 1+\E|Y_s|=1+\int|y|\mu(dy)<\infty. \]
Substituting \eqref{noname7} into \eqref{noname5} and using boundedness of $R$,
\begin{equation}\label{to-be-showed-3}
	\E\Big(\int_0^{Tt}R(Y_s)ds\Big)^2\leq \int_0^{Tt} C\E|Y_s|^{1-p}+C_D ds\leq CTt=\mathcal{O}(T).
\end{equation}
Combining \eqref{to-be-showed2} and \eqref{to-be-showed-3}, \eqref{to-be-showed} is proved, and this finishes the proof of \eqref{to-be-showed}.

Substituting \eqref{I1-result}, \eqref{I2-result}, \eqref{I_3-result} and \eqref{I4-result} into \eqref{star},
we obtain
\[(T^{-r/\al}A_{Tt})_{t\geq 0}\to (S_t)_{t\geq 0}\quad\text{in f.d.d.}\]

\section{Brownian Limit}\label{section-Brownian-limit}
In this section we assume that $p\geq q+1-\frac{\al}{2}.$ This particularly implies $r\leq \al/2,$ and $p\geq 2-\frac{\al}{2}\geq 1,$ so by Lemma \ref{lemma-kulic} (ii), $Y$ is exponentially mixing. 
\subsection{Normal Scaling: $r<\al/2.$}\label{subsection-r<alpha/2}
In this subsection, we exclude the $p=q+1-\frac{\al}{2}$ case. That is, we always assume $p>q+1-\frac{\al}{2},$ so that $r<\al/2.$ Our goal is to show \eqref{equiv-thm-ii}.
Let $\mathcal{L}$ be the generator of \eqref{frozen}. It is known that \cite[Theorem 6.7.4]{APP09}
\[\mathcal{L}\phi(y)=g(y)\phi^\pr(y)+\intr\phi(y+x)-\phi(y)-x\phi^\pr(y)\mathbf{1}_{(-1,1)}(x)\nu(dx),\quad\phi\in C_0^2(\R).  \]
Here $C_0^2$ is the class of $C^2$ function which vanishes at infinity. By symmetric property of $\nu$,
\[\mathcal{L}\phi(y)=g(y)\phi^\pr(y)+\int_0^\infty\phi(y+x)+\phi(y-x)-2\phi(y)\nu(dx),\quad\phi\in C_0^2(\R),\]
thus
\begin{equation}
	\begin{aligned}[b]
		\mathcal{L}\hp(y)&=g(y)\hp^\pr(y)+\int_0^\infty\hp(y+x)+\hp(y-x)-2\hp(y)\nu(dx)\\&=\bar{g}(y)+f(y)+\int_0^\infty\hp(y+x)+\hp(y-x)-2\hp(y)\nu(dx)\\&=f(y)+R(y).
	\end{aligned}
\end{equation}
By Lemma \ref{lemma-boundR}, $R$ is globally bounded. Consider the Poisson equation
\begin{equation}
	\mathcal{L}\Psi=-R,\quad \int\Psi d\mu=0.
\end{equation}
By Lemma \ref{lemma-poisson-solution}, \[\Psi(y)=\int_0^\infty P_tR(y)dt.\]

Set $\T:=\hp+\Psi,$ then $\mathcal{L}\T=(f+R)+(-R)=f.$ Now we give a technical lemma.

\begin{lemma}\label{lemma-Holder}
(i)	For each $\eta\in (\al/2,1),$ 
\[|\T(y+x)-\T(y)|\leq C_\eta |x|^\eta,\quad x\in [-1,1]. \]

(ii) \[\sup_{y\in\R}\intr|\T(y+x)-\T(y)|^2\nu(dx)<\infty.\]
In particular, $(x,y)\mapsto|\T(y+x)-\T(y)|^2\in L^1(\nu\otimes\mu).$
\end{lemma}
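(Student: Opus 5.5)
We split $\T=\hp+\Psi$ and treat the two pieces separately, since $\hp$ is explicit while $\Psi$ is only given through the semigroup.

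\textbf{Step 1: the piece $\hp$.} Here $r<\al/2<1$, so $\p(y)=-\frac1r|y|^r\sgn(y)$ is globally $r$-Hölder: $|\p(y+x)-\p(y)|\le C|x|^r$ for all $x,y$. Since $\bp=\hp-\p$ is compactly supported, we have $\hp=\p+\bp$. For $|x|\le1$ the difference of $\hp$ is bounded by $C|x|^r$, with the $C^3$ part smoothing near the origin. It is also bounded by $C|x|$ there, because $\hp'$ is bounded: $\hp'(y)=-|y|^{r-1}$ for $|y|\ge1$ and $r<1$. Hence $|\hp(y+x)-\hp(y)|\le C|x|\le C|x|^\eta$ for $|x|\le1$ and every $\eta<1$.

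For $|x|>1$ we use the global bound $|\hp(y+x)-\hp(y)|\le C(1+|x|^r)$.

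\textbf{Step 2: the piece $\Psi$ (main obstacle).} We need $\Psi$ to be bounded and $\eta$-Hölder for every $\eta<1$. Boundedness follows from $\Psi=\int_0^\infty P_tR\,dt$ together with $|P_tR|\le Ce^{-\lambda t}$, which is \eqref{exp-decay} via Lemma \ref{lemma-kulic}(ii), valid since $p\ge1$. For Hölder regularity, the first thing to try is to split
\[
\Psi=\int_0^1P_tR\,dt+\int_1^\infty P_tR\,dt .
\]
On the second integral, use the semigroup property and a strong Feller/gradient estimate for $P_1$ of the form $\|\nabla P_1\phi\|_\infty\le C\|\phi\|_\infty$. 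This estimate comes from the $\al$-stable noise via a Bismut or coupling argument, and it yields a Lipschitz bound $C\int_1^\infty e^{-\lambda(t-1)}dt$.

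On the first integral, use a short-time estimate $\|P_t\phi\|_{C^\eta}\le Ct^{-\eta/\al}\|\phi\|_\infty$. This is integrable on $(0,1)$ exactly when $\eta<\al$, which covers every $\eta<1$.

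Alternatively, we can argue through the Poisson equation. Write $\mathcal L\Psi=-R$ as $\Delta^{\al/2}\Psi=-R-g\Psi'$ and use the Schauder theory for the fractional Laplacian, localized on unit balls, to obtain $\Psi\in C^{\al+\epsilon}_{loc}$ uniformly in the center. The term $g\Psi'$ is unbounded, which makes this localization delicate, so I expect this regularity step to be the hardest part and would rely on the coupling gradient estimate first. Only $\eta$ slightly above $\al/2$ is needed, so even a crude Hölder bound suffices.

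\textbf{Step 3: conclusion.} Part (i) follows by adding the bounds from Steps 1 and 2.

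For (ii), split the integral over $\nu$ into the regions $|x|\le1$ and $|x|>1$.

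On $|x|\le1$, part (i) gives
\[
\int_{|x|\le1}C|x|^{2\eta-1-\al}\,dx<\infty, \qquad\text{since } 2\eta>\al .
\]

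On $|x|>1$, the bounds $|\T(y+x)-\T(y)|^2\le C(1+|x|^{2r})$ and $2r<\al$ give
\[
\int_{|x|>1}(1+|x|^{2r})|x|^{-1-\al}\,dx<\infty .
\]

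Both bounds are uniform in $y$, which proves (ii). Integrating against the probability measure $\mu$ then gives the $L^1(\nu\otimes\mu)$ statement.
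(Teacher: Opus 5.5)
Your Steps 1 and 3 are sound. For $|x|\ge1$, your bound $|\hp(y+x)-\hp(y)|\le C(1+|x|^r)$, which follows from the global $r$-H\"older continuity of $\p$ and the boundedness of $\bp$, is actually more direct than the paper's $J_1/J_2$ case split. One caveat: when $r<0$, which occurs for $p>q+1$, $\p$ is not H\"older, and you should instead use that $\hp$ itself is bounded, as the paper does.

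The gap is Step 2, which carries the entire content of (i) for $\Psi$. You reduce it to two estimates for bounded measurable $\phi$:
\begin{itemize}
\item a uniform gradient estimate $\|\nabla P_1\phi\|_\infty\le C\|\phi\|_\infty$;
\item a short-time smoothing bound $\|P_t\phi\|_{C^\eta}\le Ct^{-\eta/\al}\|\phi\|_\infty$.
\end{itemize}
You then only assert that these come ``via a Bismut or coupling argument''. Nothing in the paper supplies such strong Feller-type estimates. For $g(y)=-|y|^p\sgn(y)$ with $p\ge1$, whose derivative is unbounded when $p>1$, they are not a routine citation either. Establishing them uniformly in the starting point requires a genuine coupling or subordination/Malliavin argument, which you neither give nor cite. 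As written, the key step is therefore unproved.

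The missing idea is that no smoothing by $P_t$ is needed, because $R$ is already regular. $R$ is globally Lipschitz \cite[Lemma B.3]{KP19}. Since $g$ is nonincreasing, the synchronous coupling satisfies $\frac{d}{dt}|Y^y_t-Y^z_t|=\sgn(Y^y_t-Y^z_t)\,(g(Y^y_t)-g(Y^z_t))\le0$, i.e.\ $|Y^y_t-Y^z_t|\le|y-z|$ pathwise. Hence $|P_tR(y)-P_tR(z)|\le\|R\|_{\operatorname{Lip}}|y-z|$ for every $t$. Combining this with $|P_tR|\le Ce^{-\lambda t}$ gives
\[
|\Psi(y)-\Psi(z)|\le C\int_0^\infty\min\{e^{-\lambda t},|y-z|\}\,dt=\frac{C}{\lambda}|y-z|\Bigl(1+\log\frac{1}{|y-z|}\Bigr)\le C_\eta|y-z|^\eta
\]
for $|y-z|\le1$ and any $\eta<1$. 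This elementary log-Lipschitz bound is all that (i) requires. Your route, if the gradient estimates were proved, would give H\"older regularity of $\Psi$ for arbitrary bounded $R$, but that generality is not needed here.
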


\begin{proof}
(i) Note that 
\[Y^y_t-Y^z_t=y-z+\intot g(Y^y_s)-g(Y^z_s)ds,\quad Y^y_0-Y^z_0=y-z, \]
and by definition of $g$ and absolute continuity of $t\mapsto |Y^y_t-Y^z_t|,$ \[\frac{d}{dt}|Y^y_t-Y^z_t|=\sgn(Y^y_t-Y^z_t)\cdot (g(Y^y_t)-g(Y^z_t))\leq 0,\enspace\text{a.e,}\]
 implying that $|Y^y_t-Y^z_t|\leq |y-z|,\enspace t\geq 0.$ By \cite[Lemma B.3]{KP19}, $R$ is globally Lipschitz, so 
\[|P_t R(y)-P_t R(z)|=|\E R(Y^y_t)-R(Y^z_t)|\leq\E|R(Y^y_t)-R(Y^z_t)|\leq\|R\|_{\operatorname{Lip}}|Y^y_t-Y^z_t|\leq C|y-z|. \]
Recall also that $|P_tR(y)|\leq Ce^{-\lambda t},$ so $|P_tR(y)-P_tR(z)|\leq C\min\{e^{-\lambda t},|y-z|\},$ and hence for $y,z\in\R$ with $|y-z|\leq 1$ and $\eta\in (0,1),$
\begin{equation*}
	\begin{aligned}[b]
		|\Psi(y)-\Psi(z)|&\leq \int_0^\infty|P_tR(y)-P_tR(z)|dt\\&\leq C\int_0^\infty\min\{e^{-\lambda t},|y-z|\}dt\\&=C\int_0^{\frac{1}{\lambda}\log\frac{1}{|y-z|}}|y-z|dt+C\int_{\frac{1}{\lambda}\log\frac{1}{|y-z|}}^\infty e^{-\lambda t}dt\\&=\frac{C}{\lambda}|y-z|(1+\log(\frac{1}{|y-z|}))\\&\leq C_\eta |y-z|^\eta,
	\end{aligned}
\end{equation*}
where we use $a(1+\log(1/a))\leq C_\eta a^\eta,\enspace 0<a\leq 1$ in the last inequality. Therefore,
\begin{equation}\label{psi-increment}
	|\Psi(y+x)-\Psi(y)|\leq C_\eta|x|^\eta,\quad x\in[-1,1].
\end{equation}

 Since
\[\hp^\pr(y)=\p^\pr(y)=-y^{r-1},\quad y\geq 1,\]
 by symmetric property and the fact that $r\leq\al/2<1$, 
\[|\hp^\pr(y)|\leq |y|^{r-1}\leq C,\quad |y|\geq 1.\] This means $\hp^\pr$ is globally bounded, so $\hp$ is Lipschitz, and hence 
\[ |\hp(y+x)-\hp(y)|\leq C|x|\leq C|x|^\eta,\quad x\in[-1,1], \]
 Combined with \eqref{psi-increment}, (i) is proved.

(ii) By (i),
\begin{equation}
	\int_{|x|<1}|\T(y+x)-\T(y)|^2\nu(dx)\leq C_\eta\int_{|x|<1}|x|^{2\eta}\nu(dx)\leq C_\eta\int_{|x|<1}|x|^{2\eta-1-\al}dx\leq C_{\eta,\al}
\end{equation}
for each $\eta\in (\al/2,1).$ So it remains to show
\begin{equation}
	\int_{|x|\geq1}|\T(y+x)-\T(y)|^2\nu(dx)\leq M
\end{equation}
with $M$ independent of $y$. Since $\Psi$ is bounded and $\nu(|x|\geq 1)<\infty$, we have
\begin{equation}\label{increment-psi}
	\int_{|x|\geq1}|\Psi(y+x)-\Psi(y)|^2\nu(dx)\leq M,
\end{equation}
 so it remains to show
\begin{equation}\label{increment-hp}
	\int_{|x|\geq1}|\hp(y+x)-\hp(y)|^2\nu(dx)\leq M.
\end{equation}
If $r<0,$ then $\hp$ is also bounded, and \eqref{increment-hp} is proved in precisely the same way as \eqref{increment-psi}. So below we assume $0<r<\al/2.$ Note that $|\hp(z)|\leq C(1+|z|^r)$ for all $z\in\R.$

\noindent\textbf{Case 1: $|y|< 2.$}

Since $|x|\geq 1,$ $|y+x|\leq 2+|x|\leq 2|x|+|x|=3|x|,$ so $|\hp(y)|\leq C(1+|y|^r)\leq C(1+|x|^r),$ and
\[|\hp(y+x)|\leq C(1+|y+x|^r)\leq C|x|^r,\] so by the fact $r<\al/2,$
\begin{equation}
	\int_{|x|\geq 1}|\hp(y+x)-\hp(y)|^2\nu(dx)\leq\int_{|x|\geq 1}C(1+|x|^{2r})\nu(dx)\leq C(1+\int_1^\infty x^{2r-1-\al}dx)\leq C.
\end{equation} 

\noindent\textbf{Case 2: $|y|\geq 2.$}

Write
\begin{align}\label{J1+J2}
	&\quad\int_{|x|\geq 1}|\hp(y+x)-\hp(y)|^2\nu(dx)\nonumber\\&=\int_{1\leq|x|\leq |y|/2}|\hp(y+x)-\hp(y)|^2\nu(dx)+\int_{|x|>|y|/2}|\hp(y+x)-\hp(y)|^2\nu(dx)\nonumber\\&=:J_1+J_2.
\end{align}
We estimate $J_1$ first. By mean value theorem,
\[|\hp(y+x)-\hp(y)|=|\hp^\pr(y+\theta x)x|\leq C|y|^{r-1}|x|,\]
where we used $|\hp^\pr(z)|\leq C|z|^{r-1}$ for $z$ with $|z|\geq 1$ and that
\[|y+\theta x|\geq |y|-|\theta x|\geq |y|-|x|\geq |y|-\frac{|y|}{2}=\frac{|y|}{2}. \]
Therefore,
\begin{equation}\label{J1-estimate}
	\begin{aligned}[b]
		\quad J_1&\leq C\int_{1\leq|x|\leq |y|/2}|y|^{2r-2}|x|^2\nu(dx)\\&= C|y|^{2r-2}\int_{1\leq|x|\leq|y|/2}|x|^{1-\al}dx\\&\leq C|y|^{2r-\al}\leq C.
	\end{aligned}
\end{equation}
For $J_2,$ notice that
\[|\hp(y+x)-\hp(y)|\leq |\hp(y+x)|+|\hp(y)|\leq C(1+|y+x|^r)+|y|^r\leq C(1+|x|^r+|y|^r).\]
But $|x|\geq |y|/2\geq 1,$ so $1+|x|^r+|y|^r\leq 1+C|x|^r\leq |x|^r+C|x|^r\leq C|x|^r.$ Therefore,
\[|\hp(y+x)-\hp(y)|\leq C|x|^r,\]
from which
\begin{equation}\label{J2-result}
	J_2\leq C\int_{|x|>|y|/2}|x|^{2r}\nu(dx)=C\int_{|x|>|y|/2} x^{2r-1-\al}dx\leq C|y|^{2r-\al}\leq C.
\end{equation}
Combining \eqref{J1+J2}, \eqref{J1-estimate} and \eqref{J2-result}, the $|y|\geq 2$ case is proved.
\end{proof}
From the lemma above,
\[\E\intot\intr|\T(Y_{s-}+x)-\T(Y_{s-})|^2\nu(dx)ds<\infty, \] so
 $\intot\intr\T(Y_{s-}+x)-\T(Y_{s-})\tilde{N}(dsdx)$ is well-defined for each $t\geq0.$
Let
\[M^\Theta(t)=\Theta(Y_t)-\Theta(Y_0)-\intot\mathcal{L}\Theta(Y_s)ds,\enspace I^\Theta(t)=\intot\intr\T(Y_{s-}+x)-\T(Y_{s-})\tilde{N}(dsdx).   \] 
We claim that $M^\T=I^\T.$ Indeed, both of $M^\T$ and $I^\T$ are c\`adl\`ag $(\mathcal{F}_t)$ local martingales, and so is their difference. Moreover,
\[\Delta M^\T(t)=\T(Y_t)-\T(Y_{t-}),\quad \Delta I^\T(t)=\T(Y_{t-}+\Delta L_t)-\T(Y_{t-})=\Theta(Y_t)-\Theta(Y_{t-}),  \]
the last equality being a consequence of $\Delta L_t=\Delta Y_t,$ thanks to the additive structure of \eqref{frozen}. Therefore, $M^\T-I^\T$ is a continuous $(\mathcal{F}_t)$ local martingale. By the definition of $(\mathcal{F}_t)$ and martingale representation theorem, $M^\T-I^\T$  must be $0.$ Consequently,
\[\Theta(Y_t)-\T(Y_0)-\intot\mathcal{L}\Theta(Y_s)ds=\intot\intr\T(Y_{s-}+x)-\T(Y_{s-})\tilde{N}(dsdx).\]
Recalling that $\mathcal{L}\Theta=f,$ we obtain
\[\intot f(Y_s)ds=\Theta(Y_t)-\Theta(Y_0)-\intot\intr\T(Y_{s-}+x)-\T(Y_{s-})\tilde{N}(dsdx).\]
Define $\M_t:=\intot\intr\T(Y_{s-}+x)-\T(Y_{s-})\tilde{N}(dsdx),$ we then have
\begin{equation}\label{star2}
	A_{Tt}=\T(Y_{Tt})-\T(Y_0)-\M_{Tt}.
\end{equation}
The definition of $\T$ implies that $\T$ is bounded on compact sets, so by the same argument leading to \eqref{I1-result}, we have
\begin{equation}\label{vanishing-theta}
	\lim\limits_{T\to \infty}	
		\Big(\frac{1}{\sqrt{T}}\Theta(Y_{Tt})\Big)_{t\geq 0}=0\quad\text{in f.d.d,}
\end{equation} 
and
\begin{equation}\label{vanishing-theta2}
	\lim\limits_{T\to \infty}	
	\frac{1}{\sqrt{T}}\Theta(Y_{0})=0\quad\text{ a.s.}
\end{equation} 
Let $\bm^T(t):=\frac{1}{\sqrt{T}}\M_{Tt},$ $\mu^T$ be random measure associated with the jump of $\bm$, i.e.
\[\mu^T([0,t]\times B)=\#\{s\in[0,t]:\Delta\bm^T_s\in B\}, \]
and $\nu^T$ be the compensator of $\mu^T$.  Define predictable random field
\[\tT(s,x):=\T(Y_{s-}+x)-\T(Y_{s-}),\] so that $\M_t=\intot\intr\tT(s,x)\tilde{N}(dsdx).$ Since
\begin{align*}
	\mu^T([0,t]\times B)&=\#\{s\in[0,t]:\frac{1}{\sqrt{T}}\Delta\M_{Ts}\in B\}\\&=\#\{s\in[0,Tt]:\frac{1}{\sqrt{T}}\Delta\M_{s}\in B\}\\&=\int_0^{Tt}\intr\mathbf{1}_{\{\frac{1}{\sqrt{T}}\tT(s,x)\in B\}}N(dsdx),
\end{align*}
for all Borel set $B$ with $0\notin B,$ we have
\begin{equation*}
	\nu^T([0,t]\times B)=\int_0^{Tt}\intr\mathbf{1}_{\{\frac{1}{\sqrt{T}}\tT(s,x)\in B\}}\nu(dx)ds.
\end{equation*}
Therefore, for deterministic function $G=G(x),$ 
\begin{equation}\label{integral-vT}
	\begin{aligned}
		\intot\intr G(x)\nu^T(dsdx)=\int_0^{Tt}\intr G(\frac{1}{\sqrt{T}}\tT(s,x))\mathbf{1}_{\{\tT(s,x)\neq0\}}\nu(dx)ds.
	\end{aligned}
\end{equation} Equation \eqref{integral-vT} is checked by testing on indicator function and then an approximation argument.
In particular, setting $G(x)=|x|^2\mathbf{1}_{\{|x|\geq \delta\}}$ for $\delta>0,$ we have
\begin{equation*}
	\begin{aligned}[b]
		&\quad\intot\intr |x|^2\mathbf{1}_{\{|x|\geq \delta\}}\nu^T(dsdx)\\&=\int_0^{Tt}\intr |\frac{1}{\sqrt{T}}\tT(s,x)|^2\mathbf{1}_{\{|\frac{1}{\sqrt{T}}\tT(s,x)|\geq\delta\}}\mathbf{1}_{\{\tT(s,x)\neq 0\}}\nu(dx)ds\\&=\frac{1}{T}\int_0^{Tt}\intr|\T(Y_{s-}+x)-\T(Y_{s-})|^2\mathbf{1}_{\{|\tT(s,x)|\geq\sqrt{T}\delta\}}\mathbf{1}_{\{\tT(s,x)\neq 0\}}\nu(dx)ds\\&\leq \frac{1}{T}\int_0^{Tt}\intr|\T(Y_{s-}+x)-\T(Y_{s-})|^2\mathbf{1}_{\{|\T(Y_{s-}+x)-\T(Y_{s-})|\geq\sqrt{T}\delta\}}\nu(dx)ds.
	\end{aligned}
\end{equation*}
Taking expectation and using the stationarity of $Y$,
\begin{equation}
	\begin{aligned}[b]
		&\quad\E\intot\intr |x|^2\mathbf{1}_{\{|x|\geq \delta\}}\nu^T(dsdx)\\&\leq \frac{1}{T}\int_0^{Tt}\intr\int_\R|\T(y+x)-\T(y)|^2\mathbf{1}_{\{|\T(y+x)-\T(y)|\geq\sqrt{T}\delta\}}\mu(dy)\nu(dx)ds\\&=t\intr\int_\R|\T(y+x)-\T(y)|^2\mathbf{1}_{\{|\T(y+x)-\T(y)|\geq\sqrt{T}\delta\}}\mu(dy)\nu(dx).
	\end{aligned}
\end{equation}
For each $(y,x)\in\R^2$, there is
\[|\T(y+x)-\T(y)|^2\mathbf{1}_{\{|\T(y+x)-\T(y)|\geq\sqrt{T}\delta\}}\to 0,\quad T\to\infty,\] so by Lemma \ref{lemma-Holder} (ii) and dominated convergence theorem,
\begin{equation}
	\E\intot\intr |x|^2\mathbf{1}_{\{|x|\geq \delta\}}\nu^T(dsdx)\to 0,\quad T\to\infty.
\end{equation}
By \cite[Theorem 3.22 in Chapter VIII]{JS03}, $\bm^T\to\bm$ weakly in Skorokhod topology, where $\bm$ is a continuous Gaussian martingale. It remains to determine the distribution of $\bm.$

Note that by \cite[Lemma 2.4]{KUN04}, the predictable quadratic variation of $\M_{T\cdot}$ is
\begin{equation}
	\begin{aligned}[b]
		&\quad\la\M_{T\cdot},\M_{T\cdot}\ra_t\\&=\la\int_0^{T\cdot}\intr\T(Y_{s-}+x)-\T(Y_{s-})\tilde{N}(dsdx),\int_0^{T\cdot}\intr\T(Y_{s-}+x)-\T(Y_{s-})\tilde{N}(dsdx)\ra_t\\&=\int_0^{Tt}\intr |\T(Y_{s-}+x)-\T(Y_{s-})|^2\nu(dx)ds,
	\end{aligned}
\end{equation}
so by ergodic theorem \cite[Theorem 20.21]{KAL02},
\begin{equation*}
\begin{aligned}[b]
	&\quad\la\frac{1}{\sqrt{T}}\M_{T\cdot},\frac{1}{\sqrt{T}}\M_{T\cdot}\ra_t\\&=\frac{1}{T}\int_0^{Tt}\intr |\T(Y_{s-}+x)-\T(Y_{s-})|^2\nu(dx)ds\\&\to t\int_\R\intr|\T(y+x)-\T(y)|^2\nu(dx)\mu(dy),\enspace\text{a.s.}
\end{aligned}
\end{equation*}
The last term is well-defined by Lemma \ref{lemma-Holder} (ii).
By \cite[Corollary 3.24 in Chapter VIII]{JS03}, 
\begin{equation}\label{def-of-sigma2}
	\la\bm,\bm\ra_t=\sigma^2 t\quad\text{with}\enspace\sigma^2:=\int_\R\intr|\T(y+x)-\T(y)|^2\nu(dx)\mu(dy).
\end{equation}
It is obvious that $\sigma\neq 0.$ By L\'evy characterization theorem,  $\law(\bm)=\law(W),$ where $W$ is a Brownian motion with variance $\sigma^2.$ Note that $-W=W$ in the sense of f.d.d. Combined with \eqref{star2}, \eqref{vanishing-theta} and \eqref{vanishing-theta2}, as $T\to\infty,$
\[\Big(\frac{1}{\sqrt{T}}A_{Tt}\Big)_{t\geq 0}\to (W_t)_{t\geq 0}\quad\text{in f.d.d.}   \]

\subsection{Logarithmic Scaling: $r=\al/2.$}

In this subsection, we focus on the $r=\al/2$ case. Our goal is to show \eqref{equiv-thm-iii}.

As in \eqref{new-int-f}, 
\begin{equation}\label{star3}
	A_{Tt}=\hp(Y_{Tt})-\hp(Y_0)-M_{Tt}-\int_0^{Tt}R(Y_s)ds.
\end{equation}
By the same argument deriving \eqref{I1-result}--\eqref{I2-result},
\begin{equation}\label{log-vanishing-hp}
	\lim\limits_{T\to\infty}\frac{\hp(Y_{Tt})}{\sqrt{T\log T}}=0,\enspace\lim\limits_{T\to\infty}\frac{\hp(Y_0)}{\sqrt{T\log T}}=0
\end{equation}
in f.d.d. Since $r=\frac{\al}{2},$ we have $p=q+1-r=q+1-\frac{\al}{2}\geq 1,$ so $\mu$ is exponentially mixing. Besides, $R$ is bounded due to Lemma \ref{lemma-boundR}, so by the same argument leading to \eqref{to-be-showed2}, we obtain 
\[ \E\Big(\int_0^{Tt}R(Y_s)ds\Big)^2=\mathcal{O}(T), \]
so by Jensen inequality,
\begin{equation*}
	\frac{\int_0^{Tt}R(Y_s)ds}{\sqrt{T\log T}}=\mathcal{O}_{L^1}(\frac{1}{\sqrt{\log T}}),\quad T\to\infty,
\end{equation*}
and hence
\begin{equation}\label{log-vanishing-R}
	\lim\limits_{T\to\infty}\frac{\int_0^{Tt}R(Y_s)ds}{\sqrt{T\log T}}=0\enspace \text{in f.d.d.}
\end{equation}
Now it remains to identify the limit of $M_{Tt}$ as $T\to\infty.$ Decompose $M$ as
\begin{equation}\label{new-M}
	\begin{aligned}[b]
		&\quad M_t\\&=\intot\int_{|x|<1}\hp(Y_{s-}+x)-\hp(Y_{s-})\tilde{N}(dsdx)+\intot\int_{|x|\geq1}\hp(Y_{s-}+x)-\hp(Y_{s-})\tilde{N}(dsdx)\\&=\intot\int_{|x|<1}\hp(Y_{s-}+x)-\hp(Y_{s-})\tilde{N}(dsdx)\\&\quad+\intot\int_{|x|\geq 1}\hp(Y_{s-}+x)-\hp(Y_{s-})-\p(x)\tilde{N}(dsdx)+\intot\int_{|x|\geq 1}\p(x)\tilde{N}(dsdx)\\&=:M^{<1}_t+H_t+Z_t.
	\end{aligned}
\end{equation}
Since
\[\hp^\pr(y)=\p^\pr(y)=-y^{r-1},\quad y\geq 1, \]
and $r=\al/2\in (1/2,1),$ $\hp^\pr$ is bounded on $(1,\infty).$ Besides, $\hp^\pr$ is bounded on compacts, so $\hp^\pr$ is bounded on $[0,\infty).$ The odd property of $\hp$ then implies $\hp^\pr$ is bounded on $\R$, so $\hp$ is globally Lipschitz, so
\begin{equation*}
	\begin{aligned}[b]
		\E|M^{<1}_t|^2&=\E\intot\int_{|x|<1}|\hp(Y_{s-}+x)-\hp(Y_{s-})|^2\nu(dx)ds\\&\leq\E\intot\int_{|x|<1}C|x|^2\nu(dx)ds\\&=Ct\int_{|x|<1}|x|^2\nu(dx).
	\end{aligned}
\end{equation*}
Therefore, for each $t\geq 0,$
\begin{equation*}
	\mathbb{E}\Big|\frac{M^{<1}_{Tt}}{\sqrt{T\log T}}\Big|^2\leq C\int_{|x|<1}|x|^2\nu(dx)\frac{Tt}{T\log T}\leq C\frac{t}{\log T}\to 0,\quad T\to\infty,
\end{equation*}
implying that for each $t\geq 0,$
\begin{equation}\label{log-vanishing-M-leq}
	\lim\limits_{T\to\infty}\frac{M^{<1}_{Tt}}{\sqrt{T\log T}}=0\enspace\text{in f.d.d.}
\end{equation}
Now we turn to $H.$ Recall in Section \ref{subsection-I3}, we defined \[h(y,x)=\hp(y+x)-\hp(y)-\p(x), \] so we write
\[H_t=\intot\int_{|x|\geq 1}h(Y_{s-},x)\tilde{N}(dsdx).\]
Since $r=\al/2\in (1/2,1),$ we utilize  Lemma \ref{lemma-technique} (iii) to obtain 
\[|h(y,x)|\leq C+C_r\min\{|x|^r,|y|^r\}. \]
If $|y|<1,$ then $|y|<1<|x|,$ so
\begin{equation}
	\begin{aligned}[b]
		\int_{|x|\geq 1}|h(y,x)|^2\nu(dx)&\leq \int_{|x|\geq 1}C+C_r |y|^{2r}\nu(dx)\\&=C+C_r\int_1^\infty|y|^{2r}x^{-1-\al}(dx)\\&\leq C+C_r|y|^{2r}\leq C,
	\end{aligned}
\end{equation}
If $|y|\geq 1,$
\begin{equation}
	\begin{aligned}[b]
			\int_{|x|\geq 1}|h(y,x)|^2\nu(dx)&=\int_{1\leq|x|\leq|y|}|h(y,x)|^2\nu(dx)+\int_{|x|> |y|}|h(y,x)|^2\nu(dx)\\&\leq \int_{1\leq|x|\leq|y|}C+C_r |x|^{2r}\nu(dx)+\int_{|x|> |y|} C+C_r|y|^{2r}\nu(dx)\\&\leq C+C\int_{1\leq|x|\leq|y|}|x|^{2r-1-\al}(dx)+C\int_{|x|>|y|}|y|^{2r}x^{-1-\al}dx\\&= 2C+C\log|y|.
	\end{aligned}
\end{equation}
where we used $r=\al/2$ in the last equality. By the two inequalities above,
\begin{equation*}
	\begin{aligned}[b]
		\E|H_{Tt}|^2&=\E\Big|\int_0^{Tt}\int_{|x|\geq 1}h(Y_{s-},x)\tilde{N}(dsdx)\Big|^2\\&=\E\int_0^{Tt}\int_{|x|\geq 1}|h(Y_{s-},x)|^2\nu(dx)ds\\&=\int_{\R}\int_0^{Tt}\int_{|x|\geq 1}|h(y,x)|^2\nu(dx)ds\mu(dy)\\&=\int_{|y|<1}\int_0^{Tt}\int_{|x|\geq 1}|h(y,x)|^2\nu(dx)ds\mu(dy)+\int_{|y|\geq 1}\int_0^{Tt}\int_{|x|\geq 1}|h(y,x)|^2\nu(dx)ds\mu(dy)\\&\leq \int_{|y|<1}\int_0^{Tt} C ds\mu(dy)+\int_{|y|\geq1}\int_0^{Tt}C+\log(|y|)ds\mu(dy)\\&\leq CTt\mu(|y|<1)+CTt\int_{|y|\geq 1}(\log|y|)\mu(dy)\\&\leq CTt+CTt\int_{|y|\geq 1}\log |y|\mu(dy).
	\end{aligned}
\end{equation*}
Since $\log(|y|)=o(|y|^\ell)$ for all $\ell>0,$ by Lemma \ref{lemma-kulic} (i), $\int_{|y|\geq 1}\log|y|\mu(dy)<\infty,$ so \[\E|H_{Tt}|^2\leq CTt,\] and by Jensen inequality, $\frac{H_{Tt}}{\sqrt{T\log T}}=\mathcal{O}_{L^1}(\frac{1}{\sqrt{\log T}}),$ so
\begin{equation}\label{log-vanishing-H}
	\lim\limits_{T\to\infty} \frac{H_{Tt}}{\sqrt{T\log T}}=0\enspace\text{in f.d.d.}
\end{equation}
At last we explore the limit of $Z_{Tt}.$
By \cite[(2.9)]{APP09}, the characteristic function of $Z_t$ is 
\[\E e^{iuZ_t}=e^{t\hat\psi(u)}  \]
where $\hat\psi(u)=\int_\R (e^{iux}-1-iux)\lambda(dx)$ and $\lambda(B):=\nu(\{|x|\geq 1\}\cap \p^{-1}(B))$ for each Borel set $B$ in $\R-\{0\}.$ A direct computation gives 
\begin{equation}\label{def-of-lambda}
	\lambda(dx)=\frac{1}{r^3}|x|^{-3}\mathbf{1}_{\{|x|\geq 1/r\}}dx,
\end{equation}
so the exponent of characteristic of $Z_{Tt}/\sqrt{T\log T}$ is
\begin{equation*}
	\begin{aligned}[b]
		&\quad\log\E \exp\Big(iu\frac{Z_{Tt}}{\sqrt{T\log T}}\Big)\\&=Tt\hat\psi(\frac{u}{\sqrt{T\log T}})\\&=Tt\int_\R\exp(i\frac{u}{\sqrt{T\log T}}x)-1-i\frac{ux}{\sqrt{T\log T}}\lambda(dx)\\&=Tt\int_\R\Big[\cos(\frac{u}{\sqrt{T\log T}}x)-1\Big]\lambda(dx),
	\end{aligned}
\end{equation*}
where we used Euler identity the symmetric property of $\lambda$ in the last equality.
But by \eqref{def-of-lambda}, symmetry of $\lambda$ and a change-of-variable,
\begin{equation*}
	\begin{aligned}[b]
		&\quad\int_\R \Big[\cos(\frac{u}{\sqrt{T\log T}}x)-1\Big]\lambda(dx)\\&=\frac{2}{r^3}\int_{1/r}^\infty \Big[\cos(\frac{u}{\sqrt{T\log T}}x)-1\Big]|x|^{-3}dx\\&=\frac{2}{r^3T\log T}\int_{\frac{1}{r\sqrt{T\log T}}}^\infty [\cos(uy)-1]y^{-3}dy
	\end{aligned}
\end{equation*}
Combining two equations above,
\begin{equation}\label{log-c.f.}
	\log\E \exp\Big(iu\frac{Z_{Tt}}{\sqrt{T\log T}}\Big)=\frac{2t}{r^3\log T}\int_{\frac{1}{r\sqrt{T\log T}}}^\infty [\cos(uy)-1]y^{-3}dy.
\end{equation}
Let $\mathcal{R}(x)=\cos x-1+(x^2/2).$ It is direct to check $0\leq \mathcal{R}(x)\leq x^4/24,$ so for fixed $u\in\R$ and $\delta\in (0,1),$
\begin{equation*}
	\begin{aligned}[b]
		&\quad\Big|\int_\delta^1 [\cos(uy)-1]y^{-3}dy+\frac{u^2}{2}\log\frac{1}{\delta}\Big|\\&=\Big|\int_\delta^1(\mathcal{R}(uy)-\frac{u^2y^2}{2})y^{-3}dy+\frac{u^2}{2}\log\frac{1}{\delta}\Big|\\&=\Big|\int_\delta^1\mathcal{R}(uy)y^{-3}dy\Big|\\&\leq \int_\delta^1 \frac{u^4y}{24}dy\leq \frac{u^4}{48}=\mathcal{O}_u(1),
	\end{aligned}
\end{equation*}
so we have
\[\int_\delta^1 [\cos(uy)-1]y^{-3}dy=-\frac{u^2}{2}\log\frac{1}{\delta}+\mathcal{O}_u(1).\] In addition,
\[\int_1^\infty [\cos(uy)-1]y^{-3}dy=\mathcal{O}_u(1). \]
Combining two equations above,
\begin{equation*}
	\int_\delta^\infty [\cos(uy)-1]y^{-3}dy=-\frac{u^2}{2}\log\frac{1}{\delta}+\mathcal{O}_u(1).
\end{equation*}
Substituting into \eqref{log-c.f.},
\begin{equation*}
	\begin{aligned}[b]
		&\quad\log\E \exp\Big(iu\frac{Z_{Tt}}{\sqrt{T\log T}}\Big)\\&=\frac{2t}{r^3\log T}\Big(-\frac{u^2}{2}\log(r\sqrt{T\log T})+\mathcal{O}_u(1)\Big)\\&=\frac{2t}{r^3\log T}\Big(-\frac{u^2}{2}\log r-\frac{u^2}{4}\log T-\frac{u^2}{4}\log\log T\Big)+\frac{2t}{r^3\log T}\mathcal{O}_u(1)\\&=-\frac{tu^2}{2r^3}+\mathcal{O}_u(\frac{1}{\log T})+\mathcal{O}_u(\frac{\log \log T}{\log T}).
	\end{aligned}
\end{equation*}
Therefore,
\[\lim\limits_{T\to\infty}\log\E \exp\Big(iu\frac{Z_{Tt}}{\sqrt{T\log T}}\Big)=-\frac{tu^2}{2r^3} , \] and hence
\begin{equation*}
	\lim\limits_{T\to\infty}\E \exp\Big(iu\frac{Z_{Tt}}{\sqrt{T\log T}}\Big)=\exp(-\frac{tu^2}{2r^3}).
\end{equation*}
This means, for each $t\geq 0,$
\[\frac{Z_{Tt}}{\sqrt{T\log T}}\to r^{-3/2}B_t\quad\text{in distribution,}\enspace T\to\infty\] for some standard Brownian motion $B.$ But $Z$ is a L\'evy process, so there is necessarily
\begin{equation}\label{log-BM-Z}
	\Big(\frac{Z_{Tt}}{\sqrt{T\log T}}\Big)_{t\geq 0}\to (r^{-3/2}B_t)_{t\geq 0}\enspace\text{in f.d.d.}
\end{equation}
Combining \eqref{new-M}, \eqref{log-vanishing-M-leq}, \eqref{log-vanishing-H} and \eqref{log-BM-Z},
\[ \big(\frac{M_{Tt}}{\sqrt{T\log T}}\big)_{t\geq 0}\to (r^{-3/2}B_t)_{t\geq 0} \enspace\text{in f.d.d.} \]
Combined with \eqref{star3}--\eqref{new-M}, and use the fact that $-B=B$ in the sense of f.d.d, \eqref{equiv-thm-iii} is proved.

\appendix
\section{Appendix}\label{appendix}

\begin{lemma}\label{lemma-wellposed}
	For each $p>0$ and $\al\in (1,2),$ equation \eqref{frozen} (and hence \eqref{slow-fast}) admit a unique strong solution.
\end{lemma}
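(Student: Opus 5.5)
We need to prove well-posedness of $\dot Y=g(Y)+\dot L$ with $g(y)=-|y|^p\sgn(y)$, $p>0$, $\al\in(1,2)$. The key feature is the additive structure: substituting $Z_t:=Y_t-L_t$ turns the SDE into the pathwise random ODE
\[
\dot Z_t=g(Z_t+L_t),\qquad Z_0=Y_0,
\]
which we solve $\omega$ by $\omega$. Since $L$ is c\`adl\`ag, the right-hand side is measurable in $t$, locally bounded, and continuous in $Z$. Carathéodory's theorem therefore gives local existence of an absolutely continuous solution, and this will be the first step.

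\textbf{Global existence (no blow-up).} The map $g$ is dissipative: $\sgn(z)g(z+\ell)\le 0$ whenever $|z|\ge|\ell|$. Hence on $[0,t]$ we get
\[
|Z_s|\le \max\{|Y_0|,\sup_{u\le t}|L_u|\}
\]
by a comparison argument on $|Z|$: wherever $|Z_s|$ exceeds $\sup|L|$, its derivative is nonpositive a.e. This a priori bound extends the solution to all of $[0,\infty)$.

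\textbf{Pathwise uniqueness.} For two solutions $Z,Z'$ driven by the same path $L$, we have a.e.
\[
\frac{d}{dt}|Z_t-Z'_t|=\sgn(Z_t-Z'_t)\bigl(g(Z_t+L_t)-g(Z'_t+L_t)\bigr)\le 0,
\]
because $g$ is nonincreasing. Absolute continuity of $t\mapsto|Z_t-Z'_t|$ then gives $Z=Z'$. This is the same monotonicity argument used in the proof of Lemma~\ref{lemma-Holder}. It works even for $p<1$, where $g$ fails to be Lipschitz at $0$, and I expect this to be the only genuinely delicate point; monotonicity handles it without needing Lipschitz continuity.

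\textbf{Strong solution and the slow-fast system.} To get a strong solution, I will show adaptedness. One option is to construct the solution via Picard or Euler approximations on the ODE, whose iterates are adapted. Alternatively, uniqueness makes the solution map $(Y_0,L|_{[0,t]})\mapsto Z_t$ a measurable functional, for instance as a limit of solutions with a Lipschitz-regularized $g_n$ converging to $g$ monotonically. In either case $Y=Z+L$ is $(\mathcal F_t)$-adapted, c\`adl\`ag, and satisfies the integral equation. For \eqref{slow-fast}, the fast equation is the same problem with $g/\ep$ and the noise $\ep^{-1/\al}L$, so the above applies verbatim. Then $X^\ep_t=\xi+\intot f(\ye_s)\,ds$ is well defined because $\ye$ is locally bounded.
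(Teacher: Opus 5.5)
Your argument is correct. For $p\ge 1$ it is essentially the paper's proof: locally Lipschitz drift, with monotonicity and dissipativity giving pathwise uniqueness and non-explosion. For $p\in(0,1)$ you take a genuinely different route. The paper invokes Fournier's weak existence and pathwise uniqueness results for stable-driven SDEs, then upgrades to a strong solution via the Yamada--Watanabe theorem. You instead use the additive structure to reduce every $p>0$ at once to the random ODE $\dot Z=g(Z+L)$. From there you need only three ingredients:
\begin{itemize}
\item Carath\'eodory's local existence theorem;
\item the dissipativity bound $|Z_s|\le\max\{|Y_0|,\sup_{u\le t}|L_u|\}$;
\item the one-sided estimate $\sgn(a-b)\,(g(a)-g(b))\le 0$, which is the same monotonicity the paper itself uses in Lemma~\ref{lemma-Holder}(i).
\end{itemize}
Your route is elementary and self-contained. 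It gives uniqueness $\omega$ by $\omega$ with explicit pathwise bounds, and it avoids matching the hypotheses of results stated for general equations with state-dependent coefficients. The price is that you must check adaptedness by hand, whereas Yamada--Watanabe delivers it for free.

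On adaptedness, drop the Picard option for $p<1$: without a Lipschitz condition, Picard iterates need not converge even when the solution is unique. The Euler or regularization route does work. Take odd, nonincreasing, Lipschitz $g_n\to g$ locally uniformly. The corresponding solutions $Z^n$ are then:
\begin{itemize}
\item adapted;
\item subject to the same a priori bound, hence equicontinuous on compacts;
\item such that every limit point solves the limiting ODE.
\end{itemize}
Your uniqueness then forces the whole sequence to converge, so $Z$ and $Y=Z+L$ are adapted.
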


\begin{proof}
	When $p\geq 1,$ \eqref{frozen} is an additive SDE with locally Lipschitz drift, so it admits a local unique strong solution. The monotonicity and dissipativity gives pathwise uniqueness and non-explosion on $(0,\infty).$
	
	 When $p\in (0,1),$ we apply \cite[Proposition 2]{FOU13} to obtain the weak existence of solution of \eqref{frozen}, and apply \cite[Theorem 4]{FOU13} to obtain pathwise uniqueness. By Yamada--Watanabe theorem \cite[Theorem 2]{BLP15}, \eqref{frozen} admits a unique strong solution.
\end{proof}

	\begin{lemma}\label{lemma-kulic}
		(i) Fix $p>0$. The frozen equation \eqref{frozen} admits a unique invariant measure $\mu$, and
		\[\int |y|^\ell\mu(dy)<\infty, \]
		provided that $0<\ell<\al+p-1$.
		
		(ii) If $p\geq 1$, $\mu$ is exponentially mixing, in the sense that there are constants $C>0$ and $\lambda>0,$ such that for all $t\geq 0$ and $y\in\R$,
		\[ \|\law(Y^y_t)-\mu\|_{\operatorname{TV}}\leq Ce^{-\lambda t},  \]
		where $\|\cdot\|_{\operatorname{TV}}$ is the total variation norm.

	\end{lemma}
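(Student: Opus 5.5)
The plan is to treat (i) and (ii) separately. For (i) I would use a Lyapunov–Foster argument based on the generator $\mathcal{L}$ written in the symmetric form used in Section \ref{section-Brownian-limit}. For (ii) I would use a Harris-type coupling argument, whose key ingredient is a uniform return time to a compact set.

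\emph{Part (i).} Take $V(y)=(1+y^2)^{\ell/2}$ with $0<\ell<\al$. A direct computation gives
\[\int_0^\infty V(y+x)+V(y-x)-2V(y)\,\nu(dx)\leq C(1+|y|^{\ell-\al}),\]
which is bounded. Meanwhile $g(y)V'(y)\sim -\ell|y|^{p+\ell-1}$ as $|y|\to\infty$. Hence $\mathcal{L}V\leq C-c\,|y|^{p+\ell-1}$ outside a compact set.

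This yields tightness of the time averages $\frac1t\int_0^t\law(Y^y_s)\,ds$. The Krylov--Bogoliubov theorem, applicable by the Feller property inherited from pathwise uniqueness (Lemma \ref{lemma-wellposed}), then gives existence of $\mu$. Integrating the Foster bound against $\mu$ gives $\int|y|^{p+\ell-1}d\mu<\infty$ for every $\ell<\al$, that is, all moments of order $<\al+p-1$.

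Uniqueness follows from irreducibility and the strong Feller property. The noise is additive and $\al$-stable, so $\law(Y^y_t)$ has a positive density for every $t>0$, and $\mu$ is then unique by Doob's theorem.

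\emph{Part (ii).} For $p>1$ the drift is superlinear, and the mechanism is coming down from infinity. Compare $|Y^y_t|$ with the ODE $\dot z=-z^p$, which reaches any fixed level from $z=\infty$ in a finite time $t_0$. A pathwise comparison, controlling $\sup_{s\le t_0}|L_s|$ on an event of probability $\ge 1/2$, shows that $P(|Y^y_{t_0}|\le K)\ge \frac12$ uniformly in $y\in\R$.

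On $\{|y|\le K\}$, the positive density of the transition kernel gives the Doeblin minorization $P_{1}(y,\cdot)\ge \kappa\,\mathrm{Leb}|_{[-1,1]}$. Combining the two gives $\sup_{y,z}\|P_{t_0+1}(y,\cdot)-P_{t_0+1}(z,\cdot)\|_{\operatorname{TV}}\le 1-\kappa'<1$. Iterating via the semigroup property yields $\|\law(Y^y_t)-\mu\|_{\operatorname{TV}}\le Ce^{-\lambda t}$ with $C$ independent of $y$.

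The main obstacle is the borderline $p=1$. There $Y^y_t=ye^{-t}+\int_0^te^{-(t-s)}dL_s$ is an $\al$-stable Ornstein--Uhlenbeck process, and there is no coming down from infinity. For fixed $t$, the distance $\|\law(Y^y_t)-\mu\|_{\operatorname{TV}}$ tends to the total-variation distance between two laws whose centres are shifted by $ye^{-t}$, and this tends to $1$ as $|y|\to\infty$. So the uniform bound cannot come from the argument above.

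What I would try first is the explicit OU representation. It gives $\|\law(Y^y_t)-\mu\|_{\operatorname{TV}}\le C\min\{1,|y|e^{-t}\}+Ce^{-\al t}$ from the stable density's Lipschitz dependence on the shift and on the scale. One must then check whether the paper's uses of (ii) only integrate against $y$ with enough moments, as in \eqref{exp-decay} and Lemma \ref{lemma-Holder}. If uniformity in $y$ is genuinely needed there, the case $p=1$ would have to be flagged as requiring this $y$-dependent form.
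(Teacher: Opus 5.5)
Your proposal is correct wherever the statement is true. It takes a more self-contained route than the paper, which proves little directly:
\begin{itemize}
\item existence and uniqueness of $\mu$ are quoted from \cite{ZZ23};
\item the time-averaged bound $\frac1t\E\int_0^t|Y^y_s|^{p'+\ka-1}ds\le\frac Ct|y|^{p'}+\frac Ct+C$ is quoted from \cite[Corollary 2.7]{KP21};
\item the moment of $\mu$ then follows by Krylov--Bogoliubov: time averages are tight, limit points are invariant and hence equal $\mu$, and monotone convergence transfers the bound;
\item part (ii) is just a citation of \cite[Example 1.2]{WAN13}.
\end{itemize}

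Your inequality $\mathcal{L}V\le C-c|y|^{p+\ell-1}$ for $V=(1+y^2)^{\ell/2}$ is exactly the mechanism behind that corollary. So in (i) you gain self-containedness but must justify uniqueness yourself. Strong Feller plus positive density needs a reference when $0<p<1$, where $g$ is only H\"older at $0$. The Feller property itself is immediate, since monotonicity of $g$ gives $|Y^y_t-Y^z_t|\le|y-z|$.

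Do not ``integrate the Foster bound against $\mu$'' directly: $\int\mathcal{L}V\,d\mu=0$ presupposes $V\in L^1(\mu)$, which is what you are proving. Instead, apply Dynkin's formula from a fixed $y$ with localization and pass to the limit along time averages, as the paper does. Also, the nonlocal term is bounded by $C(1+|y|)^{\ell-\al}$, not $C(1+|y|^{\ell-\al})$.

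For $p>1$, your coming-down-from-infinity plus Doeblin argument is a genuine proof of uniform ergodicity and more informative than the citation. The step needing care is the minorization, which requires a lower bound on the transition density uniform over $|y|\le K$.

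Your objection at $p=1$ is right, and it is a defect of the statement, not of your proof. For the $\al$-stable Ornstein--Uhlenbeck process, $\sup_y\|\law(Y^y_t)-\mu\|_{\operatorname{TV}}$ does not decay at all. So (ii) with $C$ independent of $y$ is false there, and no citation can supply it.

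Your hedge resolves favourably. Every use needing uniformity in $y$ has $p\ge q+1-\frac{\al}{2}\ge 2-\frac{\al}{2}>1$: Lemma \ref{lemma-Holder}, Lemma \ref{lemma-poisson-solution}, and all of Section \ref{section-Brownian-limit}. The value $p=1$ occurs only in Section \ref{section-stable-limit} with $q=1$ (since $2-\al<1<2-\frac{\al}{2}$), in \eqref{exp-decay}. There the bound is only integrated against $\mu$. Your estimate gives $|P_sR(y)|\le C(1+|y|)e^{-s}$, and $\int|y|\,\mu(dy)<\infty$ because $\al+p-1=\al>1$. Hence \eqref{to-be-showed2} survives.
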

	\begin{proof}
		(ii) is precisely the statement of \cite[Example 1.2]{WAN13},  so the rest of proof is devoted to proving (i).
		
		The existence and uniqueness of invariant measure $\mu$ is established by \cite[Theorem 1.2]{ZZ23}.
		
		Now we show the finite $\ell$-th moment of $\mu$ for $\ell\in (0,\al+p-1).$ Consider the integral equation
		\[Y^y_t=y+\intot g(Y_s)ds+\intot\int_{|x|\geq 1}x N(dsdx)+\intot\int_{|x|<1}x\tilde{N}(dsdx). \]
		It is direct to find that
		\begin{equation*}
			\int_{|x|>1}|x|^{p^\pr}\nu(dx)=2\int_1^\infty x^{p^\pr-1-\al}dx<\infty
		\end{equation*}
		if and only if $p^\pr<\al$,
		and that
		\begin{equation*}
		  |Y_t|\geq 1\implies g(Y_t)Y_t=-|Y_t|^{1+p}\leq -|Y_t|^{1+\ka}
		\end{equation*}
		if and only if $\ka\leq p.$ We additionally require $p^\pr>1$ and $\ka>0,$ so that $p^\pr+\ka>1.$ Now ``set of assumptions ($\mathbf{A}$)'' in \cite{KP21} is satisfied.
		
		 Taking $\sigma=0$ in \cite[Corollary 2.7 (ii)]{KP21},
		\[\frac{1}{t}\E\Big(\intot |Y^y_s|^{p^\pr+\ka-1}ds\mid\mathcal{F}_0\Big)\leq\frac{C}{t}|y|^{p^\pr}+\frac{C}{t}+C,\quad t>0.  \]
		Taking expectation,
		\begin{equation}\label{KP21-coro}
			\frac{1}{t}\E\intot|Y^y_s|^{p^\pr+\ka-1}ds\leq\frac{C}{t}|y|^{p^\pr}+\frac{C}{t}+C,\quad t>0.
		\end{equation}
		Set \[\mu^y_t(A):=\frac{1}{t}\intot P_s(y,A)ds,\quad A\in\mathcal{B}(\R),\] so that
		\begin{equation}\label{time-average}
			\int_\R f(z)\mu^y_t(dz)=\frac{1}{t}\intot\int_\R f(z)P_s(y,dz)ds=\frac{1}{t}\intot\E f(Y^y_s)ds,
		\end{equation}
		where the first equality is verified by testing on indicator function, simple function and then dominated convergence theorem.
		Therefore, by \eqref{time-average}, \eqref{KP21-coro} and monotone convergence theorem,
		\begin{equation}\label{p+k-1-moment}
			\begin{aligned}[b]
				\int |z|^{p^\pr+\ka-1}\mu^y_t(dz)&=\lim\limits_{N\to\infty}\int(|z|^{p^\pr+\ka-1}\wedge N)\mu^y_t(dz)\\&=\lim\limits_{N\to\infty}\frac{1}{t}\intot\E[|Y^y_s|^{p^\pr+\ka-1}\wedge N]ds\\&\leq\frac{C}{t}|y|^{p^\pr}+\frac{C}{t}+C.
			\end{aligned}
		\end{equation}
		Let $B_R:=\{z\in\R:|z|\leq R\}.$ By Markov inequality,
		\[\mu^y_t(B_R^c)\leq \frac{\int|z|^{p^\pr+\ka-1}\mu^y_t(dz)}{R^{p^\pr+\ka-1}}\leq\frac{1}{R^{p^\pr+\ka-1}}(\frac{C}{t}|y|^{p^\pr}+\frac{C}{t}+C).\]
		Therefore, $(\mu^y_t)_{t\geq0}$ is tight in $\R$, and by Prokhorov theorem, there is a subsequence $(t_k)_{k\in\N}$ with $\lim\limits_{k\to\infty}t_k=\infty$ such that $\mu^y_{t_k}\to\mu^y$ weakly.  Therefore, for each $N\in\N,$ by \eqref{p+k-1-moment},
		\[\int(|z|^{p^\pr+\ka-1}\wedge N)\mu^y(dz)=\lim\limits_{k\to\infty}\int(|y|^{p^\pr+\ka-1}\wedge N)\mu^y_{t_k}(dz)\leq \frac{C}{t_k}|y|^{p^\pr}+\frac{C}{t_k}+C. \]
		Let $N\to\infty,$ by monotone convergence theorem,
		\begin{equation*}
			\int |z|^{p^\pr+\ka-1}\mu^y(dz)\leq \frac{C}{t_k}|y|^{p^{\pr}}+\frac{C}{t_k}+C.
		\end{equation*}
		Let $k\to\infty,$ 
		\begin{equation}\label{mu-x-bound}
			\int |z|^{p^\pr+\ka-1}\mu^y(dz)\leq C.
		\end{equation}
		Now we claim that $\mu^y=\mu$ for each $y\in\R.$ Indeed, for $r\geq 0$ and $F\in C_0(\R),$ by \eqref{time-average},
		\begin{align*}
			\Big|\int P_rFd\mu^y_t-\int Fd\mu^y_t\Big|&=\Big|\frac{1}{t}\intot P_s(P_rF)(y)ds-\frac{1}{t}\intot P_uF(y)du\Big|\\&=\Big|\frac{1}{t}\intot P_{s+r}F(y)ds-\frac{1}{t}\intot P_uF(y)du\Big|\\&=\Big|\frac{1}{t}\int_r^{t+r}P_uF(y)du-\frac{1}{t}\intot P_u F(y)du\Big|\\&=\Big|\frac{1}{t}\int_t^{t+r}P_uF(y)du-\frac{1}{t}\int_0^r P_u F(y)du\Big|\\&\leq \frac{1}{t}\int_t^{t+r}|P_uF(y)|du+\frac{1}{t}\int_0^r|P_uF(y)|du\\&\leq \frac{1}{t}\int_t^{t+r}\|P_uF\|_\infty du+\frac{1}{t}\int_0^r\|P_uF\|_\infty du\\&\leq \frac{1}{t}\int_t^{t+r}\|F\|_\infty du+\frac{1}{t}\int_0^r\|F\|_\infty du\leq \frac{2r\|F\|_\infty}{t}.
		\end{align*}
		Since $\mu^y_{t_k}\to\mu^y$ weakly, we pass to a subsequence $(t_k)$ to obtain
		\[\Big|\int P_r Fd\mu^y-\int Fd\mu^y\Big|=0,\]
		so $\mu^y$  is an invariant measure of $(P_t).$ By the uniqueness, $\mu^y=\mu.$ By \eqref{mu-x-bound},
		\begin{equation*}
			\int |z|^{p^\pr+\ka-1}\mu(dz)<\infty.
		\end{equation*}
		Since $p^\pr$ and $\ka$ can be arbitrarily close to $\al$ and $p$, respectively, we conclude that 
		\[\int |z|^{\ell}\mu(dz)<\infty\] for all $\ell<\al+p-1.$ 
	\end{proof}
		\begin{lemma}\label{lemma-technique}
		Define $h(y,x):=\hp(y+x)-\hp(y)-\p(x).$ We have the following results.
		
			(i) Set $x_+:=\max\{x,0\}.$ For each $r\in\R-\{0\},$ 	\[ |\hp^\pr(z)|\leq  C(1+|z|)^{(r-1)_+},\quad z\in\R. \]
			
			(ii) For each $|x|<1,$ and $r\in\R-\{0\},$ \[ |h(y,x)|\leq C_r(|x|(2+|y|)^{(r-1)_+}+|x|^r),\quad (y,x)\in\R^2.\]
			
			(iii) If $0<r<1,$
			\[|h(y,x)|\leq  C+C_r\min\{|x|^r,|y|^r\},\quad (y,x)\in\R^2. \]
			
			(iv) If $1\leq r\leq 2$,
			\[|h(y,x)|\leq C+C_r\min\{|x||y|^{r-1},|y||x|^{r-1}\},\quad (y,x)\in\R^2.\]
		\end{lemma}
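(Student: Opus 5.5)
The plan is to handle all four parts through the decomposition $\hp=\p+\bp$, where $\bp$ is bounded with compact support (Remark (ii)). We work directly from $\p(y)=-\frac1r|y|^r\sgn(y)$ and $\p^\pr(y)=-|y|^{r-1}$ for $y\neq 0$. When $r>0$ we extend $\p$ continuously by $\p(0)=0$. We then have
\[
h(y,x)=\big[\p(y+x)-\p(y)-\p(x)\big]+\big[\bp(y+x)-\bp(y)\big],
\]
and the second bracket is bounded by $2\|\bp\|_\infty$. This bounded piece is the source of the constant $C$ in (iii) and (iv). The real content is therefore an estimate of the \emph{cocycle defect} $D(y,x):=\p(y+x)-\p(y)-\p(x)$, which is symmetric in $(x,y)$.

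For (i), on $|z|\geq 1$ we have $|\hp^\pr(z)|=|z|^{r-1}$. This is at most $1$ if $r\leq 1$, and at most $(1+|z|)^{r-1}$ if $r>1$. On $[-1,1]$, $\hp^\pr$ is continuous, hence bounded, so (i) follows. For (ii), we do not use the decomposition. Instead we write $h=[\hp(y+x)-\hp(y)]-\p(x)$ and apply the mean value theorem to the first bracket, with intermediate point $z\in[y-1,y+1]$. Part (i) then gives $|\hp(y+x)-\hp(y)|\leq C|x|(1+|z|)^{(r-1)_+}\leq C|x|(2+|y|)^{(r-1)_+}$. Finally $|\p(x)|=|x|^r/|r|$, which is valid for either sign of $r$.

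For (iii), with $0<r<1$, the key fact is that $z\mapsto |z|^r\sgn(z)$ is globally $r$-Hölder on $\R$, so $|\p(a)-\p(b)|\leq C_r|a-b|^r$. I would verify this by splitting into same-sign and opposite-sign cases, using subadditivity of $t\mapsto t^r$ in the latter. It yields $|D(y,x)|\leq|\p(y+x)-\p(y)|+|\p(x)|\leq C_r|x|^r$. Exchanging the roles of $x$ and $y$ (by symmetry of $D$) also gives $|D|\leq C_r|y|^r$, hence the minimum.

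For (iv), with $1\leq r\leq 2$, the derivative $\p^\pr(z)=-|z|^{r-1}$ extends continuously to $0$. It is $(r-1)$-Hölder with exponent in $[0,1]$; in the case $r=1$, $\p$ is linear and $D\equiv0$. Then
\[
D(y,x)=x\int_0^1\big[\p^\pr(y+tx)-\p^\pr(tx)\big]dt ,
\]
and the Hölder bound gives $|D|\leq C|x||y|^{r-1}$; symmetry again gives $|D|\leq C|y||x|^{r-1}$.

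The only step needing care is the global Hölder continuity of $|z|^{\beta}\sgn z$ (resp.\ $|z|^{\beta}$) for $\beta\in(0,1]$ across the origin. This is elementary but must be stated carefully, since everything else is mean value theorem plus boundedness of $\bp$.
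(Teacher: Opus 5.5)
Your proposal is correct and follows the paper's proof essentially step for step: the split at $|z|=1$ for (i), the mean value theorem for (ii), global $r$-H\"older continuity of $\p$ together with symmetry of the defect for (iii), and boundedness of $\bp$ for the additive constant. The only variation is in (iv), where your identity $D(y,x)=x\int_0^1[\p'(y+tx)-\p'(tx)]\,dt$ and the $(r-1)$-H\"older continuity of $\p'$ give $|D|\le |x||y|^{r-1}$ for all $(x,y)$ at once; the paper instead treats $|x|\le|y|$ by bounding $|\p(y+x)-\p(y)|$ and $|\p(x)|$ separately by $C|x||y|^{r-1}$ (using $r\le 2$ to identify this with the minimum) and gets $|x|>|y|$ by symmetry.

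One small slip: in the opposite-sign case of the H\"older estimate you need $|a|^r+|b|^r\le 2(|a|+|b|)^r$, which follows from monotonicity of $t\mapsto t^r$, not subadditivity (subadditivity points the wrong way there); subadditivity is what handles the same-sign case.
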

		
		\begin{proof}
			(i) Since $\hp\in C^3(\R)$, we have
			\begin{equation}\label{small-z}
				|\hp^\pr(z)|\leq C\leq C(1+|z|)^{(r-1)_+},\quad |z|\leq 1.
			\end{equation}
			
			If $|z|>1$, then $\hp^\pr(z)=\p^\pr(z)=-|z|^{r-1}.$ Therefore, if $r\geq 1$,
			\[ |\hp^\pr(z)|= |z|^{r-1}\leq (1+|z|)^{r-1}= (1+|z|)^{(r-1)_+},\quad |z|\geq 1. \]
			If $r<1$,
			\[ |\hp^\pr(z)|= |z|^{r-1}\leq 1=(1+|z|)^{(r-1)_+} ,\quad |z|\geq 1.\]
			Combining two inequalities above, for all $r\in\R$,
			\[ |\hp^\pr(z)|\leq (1+|z|)^{(r-1)_+},\quad |z|\geq 1. \]
			Combining with \eqref{small-z}, the proof is finished.
			
			(ii) 
			By mean value theorem, \[ \hp(y+x)-\hp(y)=x\int_0^1\hp^\pr(y+\theta x)d\theta, \]
			so by Lemma \ref{lemma-technique} (i)
			\begin{equation}
				\begin{aligned}[b]
					|\hp(y+x)-\hp(y)|&\leq |x|\int_0^1|\hp^\pr(y+\theta x)|d\theta\\&\leq C |x|\int_0^1 (1+|y+\theta x|)^{(r-1)_+}d\theta
				\end{aligned}
			\end{equation}
			Since $|x|<1$, we continue the equation above as
			\begin{equation}
				\begin{aligned}[b]
					|\hp(y+x)-\hp(y)|&\leq C|x|\int_0^1 (2+|y|)^{(r-1)_+}d\theta\\&=C|x|(2+|y|)^{(r-1)_+}.
				\end{aligned}
			\end{equation}
			Note also that $|\p(x)|=\frac{1}{|r|}|x|^r=C_r|x|^r,$
			so
			\begin{equation*}
				|h(y,x)|\leq C_r(|x|(2+|y|)^{(r-1)_+}+|x|^r).
			\end{equation*}
			
			(iii)
			Since $0<r<1,$ $\p$ is $r$-H\"older continuous. The definition of $\p$ gives $|\p(x)|\leq C_r|x|^r,$ so
			\[|\p(x+y)-\p(y)-\p(x)|\leq |\p(x+y)-\p(y)|+|\p(x)|\leq C_r|x|^r. \]
			By symmetry property of $(x,y)$, we also have
			\[|\p(x+y)-\p(y)-\p(x)|\leq |\p(x+y)-\p(y)|+|\p(x)|\leq C_r|y|^r. \]
			Therefore,
			\[|\p(x+y)-\p(y)-\p(x)|\leq C_r\min\{|x|^r,|y|^r\}, \]
			and since $\bp$ is compactly supported,
			\begin{equation*}
				\begin{aligned}[b]
					|h(y,x)|&=|\hp(y+x)-\hp(y)-\p(x)|\\&\leq|\p(y+x)-\p(y)-\p(x)|+|\bar{\p}(y+x)-\bar{\p}(y)|\\&\leq C+C_r\min\{|x|^r,|y|^r\}.
				\end{aligned}
			\end{equation*}
			
			(iv)
		Since $r\geq 1,$	by mean value theorem,
		\[\p(y+x)-\p(y)=x\int_0^1\p^\pr(y+sx)ds, \]so
		\begin{equation}\label{mean-value}
			|\p(y+x)-\p(y)|\leq |x|\int_0^1|\p^\pr(y+sx)|ds\leq|x|\int_0^1|y+sx|^{r-1}ds.
		\end{equation}
		
		\noindent\textbf{Case 1: $|x|\leq|y|.$}
		
		We have \[|y+sx|\leq|y|+s|x|\leq 2|y|,\quad 0\leq s\leq 1,\]
		and since $r-1\geq 0,$
		$|y+sx|^{r-1}\leq (2|y|)^{r-1}\leq C_r|y|^{r-1},$ substituting which into \eqref{mean-value} gives
		\begin{equation*}
			|\p(y+x)-\p(y)|\leq |x|\int_0^1 C_r|y|^{r-1}ds\leq C_r|x||y|^{r-1}.
		\end{equation*}
		Also, since $r-1\geq 0,$ \[|\p(x)|=\frac{1}{r}|x|^{r}=C_r|x||x|^{r-1}\leq C_r|x||y|^{r-1}.\]
		Combining two inequalities above,
		\begin{equation}\label{small-x-case}
			|\p(y+x)-\p(y)-\p(x)|\leq C_r|x||y|^{r-1}=C_r\min\{|x||y|^{r-1},|y||x|^{r-1}\},
		\end{equation}
		where we used $r\leq 2$ in the last equality.
		
		\noindent\textbf{Case 2: $|x|> |y|.$}
		
		Since $|x|>	|y|,$ $|y|<|x|,$ so by \eqref{small-x-case},
		\begin{equation}\label{big-x-case}
				|\p(y+x)-\p(y)-\p(x)|\leq C_r\min\{|x||y|^{r-1},|y||x|^{r-1}\}.
		\end{equation}
		Since $h(y,x)=\hp(y+x)-\hp(y)-\p(x),$
		The proof is finished by combining \eqref{small-x-case}--\eqref{big-x-case} and the fact that $\bp$ is compactly supported.

		\end{proof}

		\begin{lemma}\label{lemma-boundR}
			If $r\in(-\infty,0)\cup(0,\al),$ $R$ is bounded.
		\end{lemma}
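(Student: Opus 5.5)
Since $R=\bar g+\mathcal{I}\hp$ with $\mathcal{I}\phi(y):=\int_0^\infty \phi(y+x)+\phi(y-x)-2\phi(y)\,\nu(dx)$, and $\bar g=g\bp^\pr$ is compactly supported and continuous on $\R-\{0\}$, the plan is to bound $\bar g$ and $\mathcal{I}\hp$ separately. For $\bar g$, boundedness on compacts away from the singularity is clear. Near $0$, the factor $|y|^p$ from $g$ multiplies $\bp^\pr=\hp^\pr-\p^\pr$, where $\hp^\pr$ is bounded and $|\p^\pr(y)|=|y|^{r-1}$. This gives $|\bar g(y)|\le C|y|^p(1+|y|^{r-1})=C(|y|^p+|y|^{q})$, which is bounded near $0$ since $q\ge1$. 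Hence $\bar g\in L^\infty$.

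The main work is a uniform bound on $\mathcal{I}\hp(y)$. Since $\hp$ is odd, $\mathcal{I}\hp$ is odd, so it suffices to take $y\ge0$. I split the $x$-integral into $x\in(0,1]$ and $x>1$ and distinguish $|y|\le 2$ from $|y|>2$.

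For $x\le1$ and $|y|\le 2$, Taylor's formula with $\hp\in C^3$ gives a second difference of size at most $Cx^2$. Since $\int_0^1x^{1-\al}dx<\infty$, this piece is bounded. For $x\le1$ and $y>2$, we have $y\pm x\ge1$, so $\hp=\p$ there. Then $|\hp''|\le C y^{r-2}\le C$ on that range, and the piece is again at most $C\int_0^1 x^2\nu(dx)$.

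For $x>1$, I first bound the growth of $\hp$. If $r<0$, then $\hp$ is bounded, so the second difference is at most $4\|\hp\|_\infty$ and integrates against $\nu(x>1)<\infty$. If $0<r<\al$, then $|\hp(z)|\le C(1+|z|^r)$.
\begin{itemize}
\item When $|y|\le2$, the second difference is at most $C(1+x^r)$, and $\int_1^\infty x^{r-1-\al}dx<\infty$ because $r<\al$.
\item When $y>2$, I split further into $1<x\le y/2$ and $x>y/2$.
\end{itemize}
On $1<x\le y/2$, both $y\pm x\ge y/2\ge1$, and the second-order Taylor bound gives $|\cdots|\le Cx^2y^{r-2}$. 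Integrating, $\int_1^{y/2}x^{1-\al}dx\le Cy^{2-\al}$, so this range contributes at most $Cy^{r-\al}\le C$. On $x>y/2$, the crude bound $|\hp(y\pm x)|+2|\hp(y)|\le C(1+x^r+y^r)\le Cx^r$ (using $y<2x$) gives a contribution $\le C\int_{y/2}^\infty x^{r-1-\al}dx=Cy^{r-\al}\le C$. The condition $r<\al$ is exactly what makes these tail integrals converge and decay.

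The main obstacle is the regime $y>2$ with $1<x\le y/2$ when $r>2$. Then $\p''$ grows like $y^{r-2}$, and the bound $x^2y^{r-2}$ integrates to $y^{r-\al}$, which is still fine because $r<\al$. In that regime I must make sure $|y\pm\theta x|$ is comparable to $y$, which holds since $x\le y/2$. I must also check that the case $x\le 1$, $y>2$ still uses $y^{r-2}$ boundedness correctly. If $r>2$ this fails, so I would instead bound that piece by $Cy^{r-2}\le Cy^{r-\al}$... but actually $r<\al<2$, so $r-2<0$ and there is no issue.

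Combining the bounds for $\bar g$ and $\mathcal{I}\hp$ gives $\|R\|_\infty<\infty$.
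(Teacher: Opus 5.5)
Your proof is correct. It follows the paper's skeleton: write $R=\bar g+K$ with $K(y)=\int_0^\infty \hp(y+x)+\hp(y-x)-2\hp(y)\,\nu(dx)$, dispose of $\bar g$ via compact support, and bound $K$. For $r<0$ your argument (Taylor with bounded $\hp''$ on small jumps, boundedness of $\hp$ on large jumps) is exactly the paper's Case 2.

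The difference is the case $0<r<\al$. The paper only records $|\hp'(y)|\le|y|^{r-1}$ and $|\hp''(y)|\le C|y|^{r-2}$ for $|y|\ge1$, and then imports $|K(y)|\le C(1+|y|)^{r-\al}$ from \cite[Lemma B.1]{KP19}. You instead prove the needed bound by hand, splitting into $x\le1$, $1<x\le y/2$ and $x>y/2$:
\begin{itemize}
\item on $1<x\le y/2$ you use second-order Taylor on $[y/2,3y/2]$, where $\hp=\p$;
\item on $x>y/2$ you use the growth bound $|\hp(z)|\le C(1+|z|^r)$ together with $y<2x$;
\item in both ranges $r<\al$ makes the tail integrals converge to $O(y^{r-\al})$.
\end{itemize}
This is essentially the mechanism behind the cited lemma. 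Your version is self-contained at the cost of length, while the citation gives the decay rate for free. You can recover that rate as well by bounding the piece $x\le1$, $y>2$ by $Cy^{r-2}\le Cy^{r-\al}$ instead of by $C$.

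Two small remarks. The paragraph discussing $r>2$ is moot, since $r<\al<2$ throughout, and can be deleted. For $\bar g$, the near-zero computation is unnecessary: $\bar g=g\hp'-f$ is continuous on all of $\R$ (as $g$, $\hp'$ and $f$ are), hence bounded once it is compactly supported.
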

		\begin{proof}
			Recall that  \[ R(y)=\bar{g}(y)+\int_0^\infty \hp(y+x)+\hp(y-x)-2\hp(y)\nu(dx)=:\bar{g}(y)+K(y),\] with $\bar{g}$ a compactly supported function, so it suffices to show $K$ is bounded. By definition of $\hp$, we have
			\[ \hp(y)=\p(y)=-\frac{1}{r}y^r,\quad y\geq 1. \]
			Therefore, 
			\begin{equation}\label{hpprpr}
				\hp^{\pr}(y)=-y^{r-1},\enspace \hp^{\pr\pr}(y)=-(r-1)y^{r-2},\quad y\geq 1.
			\end{equation}
			
			\noindent\textbf{Case 1: $r\in (0,\al).$}
			\begin{equation}\label{order-of-hp}
				|\hp^\pr(y)|\leq |y|^{r-1},\enspace |\hp^{\pr\pr}(y)|\leq C_r|y|^{r-2},\quad y\geq 1.
			\end{equation}
			By symmetric property of $\hp^\pr$ and $\hp^{\pr\pr}$ on $(-\infty,-1]\cup [1,\infty)$, \eqref{order-of-hp} is valid for all $y$ with $|y|\geq 1.$ So by \cite[Lemma B.1]{KP19},
			\begin{equation*}
				|K(y)|\leq C(1+|y|)^{r-\al}\leq C,\quad y\in\R,
			\end{equation*}
			where we used $r<\al$ in the last inequality.
			
			\noindent\textbf{Case 2: $r\in(-\infty,0).$}
			
			By \eqref{hpprpr}, and the fact that $\hp^\pr$ and $\hp^{\pr\pr}$ are bounded on compacts, they are bounded in $\R.$
			By Taylor formula,
			\[\hp(y+x)+\hp(y-x)-2\hp(y)=x^2\int_0^1 (1-s)[\hp^{\pr\pr}(y+sx)+\hp^{\pr\pr}(y-sx)]ds,\]
			and the boundedness of $\hp^{\pr\pr}$ gives
			\[|\hp(y+x)+\hp(y-x)-2\hp(y)|\leq Cx^2.\] Since $r<0,$ we also have $\hp$ is bounded, so
			\[|\hp(y+x)+\hp(y-x)-2\hp(y)|\leq C(x^2\wedge1),\]
			from which 
			\[K(y)\leq C\int_0^\infty |x|^2\wedge 1\,\nu(dx)\leq C.\]
			
			\end{proof}
			\begin{lemma}\label{lemma-poisson-solution}
				Under the setup in Section \ref{subsection-r<alpha/2}, 
				\[\Psi(y):=\int_0^\infty P_sR(y)ds\]
				satisfies the Poisson equation $\mathcal{L}\Psi=-R,$ $\int\Psi d\mu=0.$
			\end{lemma}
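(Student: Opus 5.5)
We need to show that $\Psi(y):=\int_0^\infty P_sR(y)\,ds$ is well defined, that $\int\Psi\,d\mu=0$, and that $\mathcal{L}\Psi=-R$. In the setting of Section \ref{subsection-r<alpha/2} we have $p\ge 1$, so Lemma \ref{lemma-kulic} (ii) gives exponential mixing. Lemma \ref{lemma-boundR} gives that $R$ is bounded, and $\int R\,d\mu=0$ because $R$ is odd and $\mu$ is symmetric.

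\textbf{Step 1: well-definedness, boundedness and centering.} By the estimate \eqref{exp-decay},
\[|P_sR(y)|\le \|R\|_\infty\,\|\law(Y^y_s)-\mu\|_{\operatorname{TV}}\le Ce^{-\lambda s}.\]
Hence the integral defining $\Psi$ converges absolutely, uniformly in $y$, and $\|\Psi\|_\infty\le C/\lambda$. Invariance of $\mu$ gives $\int P_sR\,d\mu=\int R\,d\mu=0$ for every $s$. Fubini (justified by the uniform bound) then yields $\int\Psi\,d\mu=0$.

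\textbf{Step 2: the resolvent identity.} By the semigroup property,
\[P_t\Psi(y)=\int_0^\infty P_{t+s}R(y)\,ds=\int_t^\infty P_sR(y)\,ds,\qquad\text{so}\qquad P_t\Psi-\Psi=-\int_0^tP_sR\,ds.\]
Since $R$ is bounded and Lipschitz (\cite[Lemma B.3]{KP19}) and $Y^y$ has c\`adl\`ag paths, the map $s\mapsto P_sR(y)=\E R(Y^y_s)$ is continuous at $s=0$ by dominated convergence. Dividing by $t$ and letting $t\downarrow0$ gives
\[\lim_{t\downarrow 0}\frac{P_t\Psi(y)-\Psi(y)}{t}=-R(y)\quad\text{pointwise, with }\Big|\frac{P_t\Psi-\Psi}{t}\Big|\le\|R\|_\infty .\]
Thus $\Psi$ lies in the domain of the (pointwise, bounded) extended generator, with $\mathcal{L}\Psi=-R$. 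Equivalently, by the Markov property,
\[\Psi(Y_t)-\Psi(Y_0)+\int_0^tR(Y_s)\,ds\]
is an $(\mathcal{F}_t)$-martingale: it equals $\E[\int_0^\infty R(Y_s)\,ds\mid\mathcal{F}_t]-\E[\int_0^\infty R(Y_s)\,ds\mid\mathcal{F}_0]$ in the $L^1$-convergent sense.

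\textbf{Main obstacle: reconciling this with the integro-differential formula for $\mathcal{L}$.} The paper writes $\mathcal{L}$ via its integro-differential expression on $C_0^2$, but $\Psi$ is not known to be $C^2$; Lemma \ref{lemma-Holder} only gives H\"older continuity. I would therefore interpret $\mathcal{L}\Psi=-R$ in the martingale-problem (extended generator) sense of Step 2, which is exactly what is used afterwards. The identification $M^\Theta=I^\Theta$ in Section \ref{subsection-r<alpha/2} requires only that
\[\Theta(Y_t)-\Theta(Y_0)-\int_0^t f(Y_s)\,ds\]
is a local martingale whose jumps are $\Theta(Y_t)-\Theta(Y_{t-})$. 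This follows by adding the martingale from Step 2 to the It\^o decomposition \eqref{new-int-f} for $\hp$, in which $\mathcal{L}\hp=f+R$ holds classically.

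If a pointwise formula is desired, I would mollify: apply the same argument to $P_\epsilon\Psi$, or use Dynkin's formula on the truncated-domain generator. I expect this regularity issue to be the only delicate point; all estimates are supplied by Lemmas \ref{lemma-kulic} and \ref{lemma-boundR}.
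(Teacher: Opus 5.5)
Your proposal is correct. It runs on the same semigroup identity as the paper, but applies it differently and reaches a formally weaker notion of solution that is still sufficient for the paper's purposes.

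\textbf{How the paper argues.} The paper works in $C_0(\R)$. It truncates to $\Psi_T=\int_0^TP_tR\,dt$ and uses strong continuity to show $\Psi_T\in D(\mathcal{L})$ with $\mathcal{L}\Psi_T=P_TR-R$. It then lets $T\to\infty$ using closedness of $\mathcal{L}$, so $\Psi$ lands in the domain of the strong (Feller) generator.

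\textbf{How you argue.} You apply the identity directly to $\Psi$, namely $P_t\Psi-\Psi=-\int_0^tP_sR\,ds$, which shows the truncation and closedness step are superfluous. You then pass to the limit only boundedly and pointwise.

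\textbf{What each route needs.} Beyond the exponential decay both arguments use, yours needs only continuity of $R$ and right-continuity of paths. The paper's route tacitly requires $(P_t)$ to be strongly continuous on $C_0(\R)$ and $R\in C_0(\R)$; it takes both for granted, since Lemma \ref{lemma-boundR} states only boundedness.

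\textbf{What each route buys.} The paper's route gives membership in $D(\mathcal{L})$, hence Dynkin's formula off the shelf. Yours gives directly the martingale $N_t:=\Psi(Y_t)-\Psi(Y_0)+\int_0^tR(Y_s)\,ds$. That martingale is all that is used when $M^\Theta$ is identified with $I^\Theta$, and your route avoids applying $\mathcal{L}$ to the unbounded $\hp$.

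\textbf{Reconciling the two.} Your identity gives $\|t^{-1}(P_t\Psi-\Psi)+R\|_\infty\le\sup_{0\le s\le t}\|P_sR-R\|_\infty$. So under the paper's strong-continuity assumption, your pointwise limit is automatically uniform. Your centering via invariance of $\mu$ and Fubini is as good as the paper's oddness argument. The mollification remark is unnecessary, since the paper does not derive the integro-differential formula for $\Psi$ either.

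\textbf{One phrase to repair.} The random variable $\int_0^\infty R(Y_s)\,ds$ need not exist as an integrable random variable; generically $\int_0^TR(Y_s)\,ds$ fluctuates on the scale $\sqrt{T}$. So ``$\E[\int_0^\infty R(Y_s)\,ds\mid\mathcal{F}_t]$'' can only mean $\lim_{T\to\infty}\E[\int_0^TR(Y_s)\,ds\mid\mathcal{F}_t]=\int_0^tR(Y_s)\,ds+\Psi(Y_t)$. It is cleaner to verify the martingale property directly from the Markov property and your identity: $\E[N_t-N_s\mid\mathcal{F}_s]=P_{t-s}\Psi(Y_s)-\Psi(Y_s)+\int_0^{t-s}P_uR(Y_s)\,du=0$.

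Also state explicitly that $\Psi$ is continuous (for instance by Lemma \ref{lemma-Holder}). That continuity is what guarantees $\Delta N_t=\Psi(Y_t)-\Psi(Y_{t-})$, which your jump identification needs.
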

			\begin{proof} In the proof of this lemma, we will use $\|\cdot\|$ to mean the supremum norm $\|\cdot\|_\infty.$
				By Lemma \ref{lemma-boundR} and the same argument in \eqref{exp-decay} gives $|P_tR(y)|\leq Ce^{-\lambda t},$ so the integral above is absolutely convergent, and $P_tR\to 0$ in $C_0(\R).$
				
				Let $\Psi_T(y):=\int_0^T P_tR(y)dt.$ Since $\int_0^T\|P_tR\|dt<\infty,$ we have
				\[\|\Psi_T-\Psi\|=\Big\|\int_T^\infty P_tRdt\Big\|\leq \int_T^\infty\|P_tR\|dt\to 0,\quad T\to\infty,\]
				so $\Psi_T\to \Psi$ in $C_0(\R).$
				 For each $h\in(0,T),$ 
				\[P_h\Psi_T=P_h\int_0^TP_tRdt=\int_0^TP_hP_tRdt=\int_0^TP_{t+h}Rdt=\int_h^{T+h}P_sRds,\]
				where we used $P_h$ is continuous linear operator in the second equality. Therefore,
				\begin{equation}\label{pre-limit}
					\begin{aligned}[b]
						\frac{P_h\Psi_T-\Psi_T}{h}&=\frac{1}{h}\Big(\int_h^{T+h}P_sRds-\int_0^TP_sRds\Big)\\&=\frac{1}{h}\Big(\int_T^{T+h}P_sRds-\int_0^hP_sRds\Big).
					\end{aligned}
				\end{equation}
				But
				\begin{equation*}
					\begin{aligned}[b]
						\Big\|\frac{1}{h}\int_T^{T+h}P_sRds-P_TR\Big\|&=\Big\|\frac{1}{h}\int_T^{T+h}P_sRds-\frac{1}{h}\int_T^{T+h}P_TRds\Big\|\\&=\Big\|\frac{1}{h}\int_T^{T+h}P_sR-P_TRds\Big\|\\&\leq \frac{1}{h}\int_T^{T+h}\|P_sR-P_TR\|ds
					\end{aligned}
				\end{equation*}
				By the strong continuity property of $C_0$-semigroup, for each $\delta>0$ there is a $h_0>0$ such that
				\[h\in (0,h_0) \implies \|P_sR-P_TR\|<\delta,\enspace\forall s\in [T,T+h].\]
				Therefore,
				\[\Big\|\frac{1}{h}\int_T^{T+h}P_sRds-P_TR\Big\|\leq\frac{1}{h}\int_T^{T+h}\delta ds=\delta, \]
				and this precisely means
				\[\lim\limits_{h\to 0}\Big\|\frac{1}{h}\int_T^{T+h}P_sRds-P_TR\Big\|=0.\]
				By the same argument,
				\[\lim\limits_{h\to0}\Big\|\frac{1}{h}\int_0^hP_sRds-R\Big\|=0.\]
				Therefore, taking $h\to 0$ on \eqref{pre-limit}, we have
				\begin{equation*}
					\lim\limits_{h\to 0}\frac{P_h\Psi_T-\Psi_T}{h}=P_TR-R\quad\text{in}\enspace C_0(\R).
				\end{equation*}
				This means
				\begin{equation}\label{psi-T}
				\Psi_T\in D(\mathcal{L}) \quad\enspace\text{and}\quad \mathcal{L}\Psi_T=P_TR-R.
				\end{equation}

				But recall that $P_TR\to 0$ in $C_0(\R),$ so by \eqref{psi-T}, $\lim\limits_{T\to\infty}\mathcal{L}\Psi_T=-R$ in $C_0(\R).$ 
				
				Assembling the results above, we have
				\[ \Psi_T\in D(\mathcal{L}),\quad\Psi_T\to \Psi\enspace\text{in}\enspace C_0(\R),\quad\mathcal{L}\Psi_T\to-R\enspace\text{in}\enspace C_0(\R). \]
				Since $\mathcal{L}$ is a closed operator, we must have
				\[\Psi\in D(\mathcal{L}),\quad \mathcal{L}\Psi=-R.\]
				Note also by linearity of $P_s$ and odd property of $R$, \[\Psi(-y)=\int_0^\infty P_sR(-y)ds=-\int_0^\infty P_sR(y)ds=-\Psi(y), \] so $\Psi$ is odd. But $\mu$ is symmetric, so $\int \Psi(y)\mu(dy)=0,$ and the proof is finished.
				
			\end{proof}

	
\end{document}